\newtheorem{theorem}{Theorem}[section]
\newtheorem{lem}[theorem]{Lemma}
\newtheorem{thm}[theorem]{Theorem}
\newtheorem{cor}[theorem]{Corollary}
\newtheorem{ex}[theorem]{Example}
\newtheorem{defn}[theorem]{Definition}
\newtheorem{rem}[theorem]{Remark}
\newcommand{\Hom}{{\rm Hom}}
\newcommand{\End}{{\rm End}}
\begin{document}

\title[Galois connections]{Essential and retractable Galois connections}

\author[S. Crivei]{Septimiu Crivei}

\address{Faculty of Mathematics and Computer Science,``Babe\c s-Bolyai" University, Str. Mihail Kog\u alni-ceanu 1,
400084 Cluj-Napoca, Romania} \email{crivei@math.ubbcluj.ro}

\subjclass[2000]{16D10, 06A15} \keywords{Galois connection, essential, retractable, lattice, uniform dimension, closed
element, extending lattice.}

\begin{abstract} For bounded lattices, we introduce certain Galois connections, called (cyclically) essential,
retractable and UC Galois connections, which behave well with respect to concepts of module-theoretic nature
involving essentiality. We show that  essential retractable Galois connections preserve uniform dimension, whereas
essential retractable UC Galois connections induce a bijective correspondence between sets of closed
elements. Our results are applied to suitable Galois connections between submodule lattices. Cyclically essential Galois
connections unify semi-projective and semi-injective modules, while retractable Galois connections unify retractable and
coretractable modules. 
\end{abstract}

\thanks{The author acknowledges the support of the grant PN-II-RU-TE-2011-3-0065.}

\maketitle

\section{Introduction}

If $M$ and $N$ are two right $R$-modules, then it is well-known that $\Hom_R(M,N)$ has a structure of $({\rm
End}_R(N),{\rm End}_R(M))$-bimodule. An interesting classical problem in module theory is to relate properties of the
modules $M$ and $N$ with properties of the bimodule $\Hom_R(M,N)$, and in particular, to relate properties of a module
with properties of its endomorphism ring. The present paper is motivated by this general problem, and its goal is to
show how one can efficiently use Galois connections in order to obtain and clarify the desired results by using easier
to handle and more transparent conditions. 

We introduce certain Galois connections, called (cyclically) essential, retractable and UC
Galois connections, between two bounded lattices. Essential retractable Galois connections turn out to
preserve uniform dimension. Also, we prove that essential retractable UC Galois connections between
two bounded modular lattices $A$ and $B$ induce a bijective correspondence between the set of closed elements of $A$ and
the set of closed elements of $B$. As an application, we show how the extending property transfers through
essential retractable UC Galois connections. Our paper expands the approach from \cite{CIKO}, not only dualizing, but
also generalizing results involving essentiality in bounded (modular) lattices. They can be applied to some
particular Galois connections between submodule lattices, previously pointed out by Albu and N\u ast\u asescu
\cite[pp.~25-26]{AN}. Then retractable Galois connections particularize to retractability and
coretractability of modules, cyclically essential Galois connections particularize to semi-projectivity and
semi-injectivity of modules, and our results immediately yield and unify recent properties from \cite{AES,HV}.
Thus it becomes clear that both retractability and coretractability are instances of the same notion, and
this is also the case for semi-projectivity and semi-injectivity. When $M$ is a right $R$-module and $N$ is an
$M$-faithful right $R$-module, we deduce a module-theoretic result of Zelmanowitz on the existence of a
bijective correspondence between the sets of closed right $R$-submodules of $M$ and closed right ${\rm
End}_R(M)$-submodules of $\Hom_R(M,N)$ \cite[Theorem~1.2]{Z}.

\section{Special Galois connections}

Let us recall the concept of (monotone) Galois connection (e.g., see \cite{Erne}). 

\begin{defn} \rm Let $(A,\leq)$ and $(B,\leq)$ be lattices. A {\it Galois connection} between them consists of a
pair $(\alpha, \beta)$ of two order-preserving functions $\alpha:A\to B$ and $\beta:B\to A$ such that for all $a\in
A$ and $b\in B$, we have $\alpha(a)\leq b$ if and only if $a\leq \beta(b)$. Equivalently, $(\alpha,\beta)$ is a Galois
connection if and only if for all $a\in A$, $a\leq \beta\alpha(a)$ and for all $b\in B$, $\alpha\beta(b)\leq b$. 

An element $a\in A$ (respectively $b\in B$) is called \emph{Galois} if $\beta\alpha(a)=a$ (respectively
$\alpha\beta(b)=b$). 
\end{defn}

Note that a Galois connection between two posets $(A,\leq)$ and $(B,\leq)$ is nothing else but an adjoint pair between
the categories $A$ and $B$, whose objects are their elements, and whose morphisms are pairs $(x,y)$ with $x\leq y$. 

Throughout the paper, we view any lattice $A$ both with a subjacent poset structure $(A,\leq)$ and as a triple
$(A,\wedge,\vee)$, where $\wedge$ and $\vee$ denote the infimum and the supremum of elements in $A$. Recall that $A$ is
bounded if it has a least element, denoted by $0$, and a greatest element, denoted by $1$. If $A$ is bounded, then we
view it as $(A,\wedge,\vee,0,1)$, and we tacitly assume that $0\neq 1$. For $a,a'\in A$, we also denote $[a,a']=\{x\in
A\mid a\leq x\leq a'\}$.

The following well-known results on Galois connections (e.g., see \cite[Proposition~3.3]{AN}, \cite{Erne}) will be
freely used throughout the paper. 

\begin{lem} \label{l:galois} Let $(\alpha,\beta)$ be a Galois connection between two lattices $A$ and $B$. Then: 

(i) $\alpha\beta\alpha=\alpha$ and $\beta\alpha\beta=\beta$.

(ii) $\alpha$ preserves all suprema in $A$ and $\beta$ preserves all infima in $B$. 

(iii) If $A$ and $B$ are bounded, then $\alpha(0)=0$ and $\beta(1)=1$. 

(iv) The restrictions of $\alpha$ and $\beta$ to the corresponding sets of Galois elements are mutually inverse
bijections.
\end{lem}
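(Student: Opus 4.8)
The plan is to prove each of the four standard Galois-connection facts directly from the two equivalent characterizations in the definition, using only order-preservation and the unit/counit inequalities $a \le \beta\alpha(a)$ and $\alpha\beta(b) \le b$.

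First I would prove (i). To show $\alpha\beta\alpha = \alpha$, I fix $a \in A$ and derive two opposite inequalities for the elements $\alpha\beta\alpha(a)$ and $\alpha(a)$ of $B$. Applying the counit inequality $\alpha\beta(b) \le b$ with $b = \alpha(a)$ immediately gives $\alpha\beta\alpha(a) \le \alpha(a)$. For the reverse, apply the unit inequality $a \le \beta\alpha(a)$ and then push it through the order-preserving map $\alpha$ to get $\alpha(a) \le \alpha\beta\alpha(a)$. Antisymmetry finishes it. The identity $\beta\alpha\beta = \beta$ follows by the symmetric argument, using $\alpha\beta(b) \le b$ inside $\beta$ and the unit inequality at $\beta(b)$.

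Next I would prove (ii). Let $S \subseteq A$ have a supremum $\bigvee S$ in $A$; I claim $\alpha(\bigvee S)$ is the supremum of $\alpha(S)$ in $B$. Since $\alpha$ is order-preserving, $\alpha(\bigvee S)$ is an upper bound of $\alpha(S)$. For minimality, let $b$ be any upper bound of $\alpha(S)$; then $\alpha(s) \le b$ for all $s \in S$, which by the adjunction is equivalent to $s \le \beta(b)$ for all $s$, hence $\bigvee S \le \beta(b)$, and applying the adjunction once more gives $\alpha(\bigvee S) \le b$. Thus $\alpha(\bigvee S)$ is the least upper bound. The statement for $\beta$ preserving infima is dual, so I would simply note that it follows by the order-reversing symmetry of the adjunction condition. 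For (iii), I would specialize (ii): the bottom $0$ is the supremum of the empty set, so $\alpha(0)$ is the supremum of the empty subset of $B$, which is $0$; dually $\beta(1) = 1$.

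Finally I would prove (iv). Write $G_A$ and $G_B$ for the sets of Galois elements in $A$ and $B$. I would first check that $\alpha$ maps $G_A$ into $G_B$: if $a$ is Galois, i.e.\ $\beta\alpha(a) = a$, then applying (i) gives $\alpha\beta(\alpha(a)) = \alpha\beta\alpha(a) = \alpha(a)$, so $\alpha(a) \in G_B$; dually $\beta$ maps $G_B$ into $G_A$. Then mutual inversion is exactly the Galois condition: for $a \in G_A$, $\beta(\alpha(a)) = a$ by definition, and for $b \in G_B$, $\alpha(\beta(b)) = b$ by definition, so the two restrictions compose to the respective identities and are therefore mutually inverse bijections. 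I do not anticipate a genuine obstacle here; the only point requiring a little care is that the suprema in (ii) are meant to include infinite and empty ones, so I would state the argument for an arbitrary subset $S$ rather than just binary joins, which is what makes the deduction of (iii) immediate.
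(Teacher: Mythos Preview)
Your proof is correct in every part; the unit/counit argument for (i), the adjoint-preserves-colimits argument for (ii), the empty-supremum specialization for (iii), and the use of (i) to show the Galois elements are stable under $\alpha$ and $\beta$ in (iv) are all standard and complete. There is nothing to compare against, however: the paper does not prove this lemma at all but simply records it as well known, citing \cite[Proposition~3.3]{AN} and \cite{Erne}. Your write-up is thus more detailed than what the paper provides.
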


One may consider the following notions of module-theoretic nature (see \cite{DHSW, GP, Schmidt}).

\begin{defn} \label{d:ess} \rm Let $A$ be a bounded lattice. 

An element $a\in A$ is called:

(1) {\it essential} in $A$ if $a\wedge x=0$ implies $x=0$ for $x\in A$.

(2) {\it closed} in $A$ if $a$ essential in $[0,a']$ implies $a=a'$ for $a'\in A$.

(3) {\it closure} of $a\in A$ in $A$ if $a$ is essential in $[0,a']$ and $a'$ is closed in $A$.

(4) {\it cyclic} if $[0,a]$ is a distributive lattice and satisfies the ascending chain condition.

The lattice $A$ is called:

(5) {\it UC} if every $a\in A$ has a unique closure in $A$, which will be denoted by $\bar
a$.

(6) {\it uniform} if every non-zero element of $A$ is essential in $[0,1]$. 

(7) {\it cyclically generated} if every element of $A$ is a join of cyclic elements.
\end{defn}

The following lemma has a similar proof as for modules (see \cite{DHSW, GP}). 

\begin{lem} \label{l:ess} Let $A$ be a bounded lattice.

(i) Let $a,b,c\in A$ be such that $a\leq b\leq c$. Then $a$ is essential in $[0,c]$ if and only if $a$ is
essential in $[0,b]$ and $b$ is essential in $[0,c]$. If $a$ is essential in $[0,c]$, then $a\wedge b$ is essential in
$[0,b]$.

(ii) If $A$ is cyclically generated, then an element $a\in A$ is essential in $A$ if and only if $a\wedge x=0$ implies
$x=0$ for cyclic elements $x\in A$.

(iii) If $A$ is modular, then every complement in $A$ is closed in $A$.

(iv) If $A$ is modular, then every element of $A$ has a closure in $A$. 

(v) If $A$ is modular UC and $a_1,a_2\in A$ are such that $a_1\leq a_2$, then their closures satisfy
$\overline{a_1}\leq \overline{a_2}$.  
\end{lem}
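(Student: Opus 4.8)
My plan is to treat the five parts in order, feeding each into the next; the real content sits in (v). Throughout, for $u\le v$ I write $u\leq_{e} v$ to mean ``$u$ is essential in $[0,v]$''. Part (i) I would prove straight from the definition, with no modularity. The equivalence for $a\le b\le c$ splits into two routine checks: if $a\leq_{e} c$ and $x\le b$ with $a\wedge x=0$ then $x=0$ (so $a\leq_{e} b$), and if $x\le c$ with $b\wedge x=0$ then $a\wedge x\le b\wedge x=0$ forces $x=0$ (so $b\leq_{e} c$); conversely, from $a\leq_{e} b\leq_{e} c$ a witness $x\le c$ with $a\wedge x=0$ is pushed down first through $b\wedge x$ and then through $x$. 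The tracing clause is similar: for $b\le c$ and $z\le b$ with $(a\wedge b)\wedge z=0$ one has $a\wedge z\le(a\wedge b)\wedge z=0$, so $z\le c$ and $a\leq_{e} c$ give $z=0$. Part (ii) is immediate from cyclic generation: a nonzero $x$ is a join of cyclic elements, not all zero, so some nonzero cyclic $x_0\le x$ has $a\wedge x_0\le a\wedge x$, reducing the essentiality test to cyclic elements.

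Part (iii) is where modularity enters: if $a$ is a complement of some $b$ (so $a\wedge b=0$ and $a\vee b=1$) and $a\leq_{e} a'$, then modularity gives $a'=a'\wedge(a\vee b)=a\vee(a'\wedge b)$, while $a\wedge(a'\wedge b)\le a\wedge b=0$ forces $a'\wedge b=0$ by essentiality, whence $a'=a$. For (iv) I would form the poset $\mathcal S=\{x\in A: a\leq_{e} x\}$ and extract a maximal element $a'$ by Zorn's lemma; upper bounds for chains are furnished by suprema, which remain essential extensions of $a$ exactly as in the module case (this is where one uses that $A$ is suitably complete, i.e.\ upper-continuous). Such a maximal $a'$ is automatically closed: if $a'\leq_{e} a''$ then (i) gives $a\leq_{e} a''$, so $a''\in\mathcal S$ and maximality yields $a''=a'$; hence $a'$ is a closure of $a$.

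The substantive statement is (v). Since $a_1\le a_2\le\overline{a_2}$, the element $a_1$ lies in the bounded modular lattice $[0,\overline{a_2}]$, so by (iv) applied there it has a closure $c$ inside $[0,\overline{a_2}]$: thus $c\le\overline{a_2}$, $a_1\leq_{e} c$, and $c$ is closed in $[0,\overline{a_2}]$ (essentiality below $c$ is intrinsic, so $a_1\leq_{e} c$ is unambiguous). The plan is to upgrade $c$ to a closure of $a_1$ in all of $A$: once $c$ is known to be closed in $A$, the pair $a_1\leq_{e} c$ with $c$ closed makes $c$ a closure of $a_1$, and the UC hypothesis forces $c=\overline{a_1}$; since $c\le\overline{a_2}$, this is precisely $\overline{a_1}\le\overline{a_2}$.

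The main obstacle is therefore the transitivity of closedness: a closed element of the interval $[0,\overline{a_2}]$, lying below the closed element $\overline{a_2}$, must be closed in $A$. This is the one step that does not reduce to the elementary manipulations above. The tempting route---take an extension $c\leq_{e} d$, deduce $d\wedge\overline{a_2}=c$ from (i), and try to conclude $d\le\overline{a_2}$ from ``$\overline{a_2}$ closed''---breaks down, because the implication ``$x\wedge y$ essential in $[0,y]$ $\Rightarrow$ $x$ essential in $[0,x\vee y]$'' is false even for submodule lattices, and closedness of $\overline{a_2}$ does not rescue it. The correct argument is the module-theoretic one behind \cite{DHSW,GP}: closedness is transitive because a closed element is a complement and essential extensions are controlled through the ambient upper-continuity, the lattice analogue of passing to injective hulls. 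With transitivity of closedness in hand the reduction above closes the proof; I would also record the byproduct that in a modular UC lattice every $a\leq_{e} x$ satisfies $x\le\overline{a}$ (take a closure of $x$ and invoke uniqueness), as it streamlines later comparisons of closures.
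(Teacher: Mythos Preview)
The paper gives no proof of this lemma at all: it simply says the proof is ``similar to modules'' and points to \cite{DHSW,GP}. Your sketch is precisely the module-theoretic line those references contain, so in that sense your approach matches the paper's intended one. Parts (i)--(iii) are handled correctly and need no further comment.

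Two points deserve mention, though they are really about the lemma's hypotheses rather than about your reasoning. First, for (iv) you explicitly invoke upper-continuity (to get suprema of chains of essential extensions and to keep them essential); this is indeed how the argument in \cite{GP} runs, but the lemma as stated assumes only ``bounded modular''. The paper is silently leaning on the module setting here, and you are right to flag the extra hypothesis rather than pretend it is free. Second, in (v) you correctly isolate transitivity of closedness as the crux and correctly reject the naive ``push $d$ down through $\overline{a_2}$'' attempt. Your justification, however, appeals to ``a closed element is a complement'', which is the \emph{converse} of (iii); in modules (and in upper-continuous modular lattices) this converse is a theorem proved via Zorn, and it is this theorem---not (iii)---that drives transitivity of closedness in \cite{DHSW}. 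You do attribute the step to \cite{DHSW,GP}, which is exactly what the paper does, so there is no gap relative to the paper; just be aware that ``closed $\Rightarrow$ complement'' is a separate, nontrivial input and not something already on the table. With that caveat, your plan for (v) (take a closure of $a_1$ inside $[0,\overline{a_2}]$, upgrade it to a closure in $A$ via transitivity, then invoke UC) is the standard route and is correct.
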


We introduce some special types of Galois connections.

\begin{defn} \rm A Galois connection $(\alpha,\beta)$ between two bounded lattices $A$ and $B$ is called: 

(1) {\it (cyclically) essential} if for every (cyclic) $a\in A$, $a$ is essential in $[0,\beta\alpha(a)]$.

(2) {\it retractable} if for every $b \in B$, $\alpha\beta(b)$ is essential in $[0,b]$.

(3) {\it UC} if for every closed element $b\in B$, $b$ is the unique closure of $\alpha\beta(b)$ in $B$. 
\end{defn}

\begin{lem} \label{l:clgal} Let $(\alpha,\beta)$ be a Galois connection between two bounded lattices $A$ and $B$.

(i) If $(\alpha,\beta)$ is essential, then every closed element of $A$ is Galois and $\beta(0)=0$. 

(ii) $(\alpha,\beta)$ is retractable if and only if $\beta(b)=0$ implies $b=0$ for $b\in B$. 

(iii) If $(\alpha,\beta)$ is (cyclically) essential retractable, and $a,a'\in A$ are (cyclic) such that $a\wedge a'=0$,
then $\alpha(a)\wedge \alpha(a')=0$. 

(iv) If $(\alpha,\beta)$ is essential retractable, and $\alpha(1)=1$, then $\alpha$ preserves complements.
\end{lem}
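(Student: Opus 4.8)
The plan is to unwind the definition of complement and verify the two defining equations separately. Suppose $a'\in A$ is a complement of $a\in A$, meaning $a\wedge a'=0$ and $a\vee a'=1$. To say that $\alpha$ preserves complements, I must show that $\alpha(a')$ is then a complement of $\alpha(a)$ in $B$, i.e.\ that $\alpha(a)\wedge\alpha(a')=0$ and $\alpha(a)\vee\alpha(a')=1$. I would treat the meet and the join conditions independently, since each relies on a different ingredient.

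For the meet, I would simply invoke part (iii) of the present lemma. Since $(\alpha,\beta)$ is essential retractable and $a\wedge a'=0$, the essential-retractable (non-cyclic) case of (iii) immediately yields $\alpha(a)\wedge\alpha(a')=0$. For the join, I would use that $\alpha$ preserves all suprema, by Lemma \ref{l:galois}(ii), together with the standing hypothesis $\alpha(1)=1$: this gives $\alpha(a)\vee\alpha(a')=\alpha(a\vee a')=\alpha(1)=1$. Combining the two computations shows that $\alpha(a')$ is a complement of $\alpha(a)$, as required.

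I do not anticipate a serious obstacle here, since the real work has already been absorbed into part (iii): its proof is where the essential and retractable hypotheses are genuinely used (to pass from $a\wedge a'=0$ in $A$ to $\alpha(a)\wedge\alpha(a')=0$ in $B$), and here I only need to quote it. The only subtlety worth flagging is that the hypothesis $\alpha(1)=1$ is indispensable for the join step: Lemma \ref{l:galois}(iii) guarantees only $\alpha(0)=0$, not $\alpha(1)=1$, so without this extra assumption the identity $\alpha(a)\vee\alpha(a')=\alpha(1)$ would not deliver the top element and the complement property could fail.
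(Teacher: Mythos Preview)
Your proposal treats only part~(iv), and for that part it is correct and is exactly what the paper has in mind: the paper's entire proof of (iv) reads ``This is clear by (iii),'' and you have simply made explicit the two implicit ingredients---part~(iii) for the meet condition and Lemma~\ref{l:galois}(ii) together with the hypothesis $\alpha(1)=1$ for the join. Your remark that $\alpha(1)=1$ is genuinely needed (and not supplied by Lemma~\ref{l:galois}(iii)) is also on point.
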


\begin{proof} (i) This is clear. 

(ii) Assume first that $(\alpha,\beta)$ is retractable. Let $b\in B$ be such that $\beta(b)=0$. Then
$\alpha\beta(b)=\alpha(0)=0$, and so $\alpha\beta(b)\wedge b=0$. Since $\alpha\beta(b)$ is essential in $[0,b]$,
it follows that $b=0$.

Now assume that $\beta(b)=0$ implies $b=0$ for $b\in B$. Let $b,b'\in B$ be such that $b'\leq b$ and
$\alpha\beta(b)\wedge b'=0$. Then we have $0=\beta(0)=\beta\alpha\beta(b)\wedge \beta(b')=\beta(b)\wedge
\beta(b')=\beta(b\wedge b')=\beta(b')$, which implies $b'=0$. This shows that $\alpha\beta(b)$ is essential in
$[0,b]$, and so $(\alpha,\beta)$ is retractable.

(iii) Since $(\alpha,\beta)$ is (cyclically) essential, $a\wedge a'=0$ implies $\beta\alpha(a)\wedge \beta\alpha(a')=0$,
and so, $\beta(\alpha(a)\wedge \alpha(a'))=0$. Since $(\alpha,\beta)$ is retractable, one must have $\alpha(a)\wedge
\alpha(a')=0$ by (ii).  

(iv) This is clear by (iii).
\end{proof}

We end this section with some results on the transfer of the essential property through retractable (essential)
Galois connections.

\begin{lem} \label{l:transfer} Let $(\alpha,\beta)$ be a retractable Galois connection between two bounded lattices $A$
and $B$ such that $\beta(0)=0$. 

(i) Let $b,b'\in B$ be such that $\beta(b)$ is essential in $[0,\beta(b')]$. Then $b\wedge b'$ is essential in
$[0,b']$.

(ii) Let $a,a'\in A$ be such that $a$ is essential in $[0,a']$ and $a'$ is a Galois element. Then $\alpha(a)$ is
essential in $[0,\alpha(a')]$. 

Assume in addition that $(\alpha,\beta)$ is (cyclically) essential (and $A$ is cyclically generated).

(iii) Let $b,b'\in B$ be such that $b$ is essential in $[0,b']$. Then $\beta(b)$ is essential in $[0,\beta(b')]$.

(iv) Let $a,a'\in A$ (with $a$ cyclic) be such that $\alpha(a)$ is essential in $[0,\alpha(a')]$. Then $a\wedge a'$ is
essential in $[0,a']$. 
\end{lem}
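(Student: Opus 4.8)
The plan is to reduce each of the four parts to a single application of retractability in the form of Lemma \ref{l:clgal}(ii), namely that $\beta(c)=0$ forces $c=0$. The two standing tools throughout will be that $\beta$ preserves all infima (Lemma \ref{l:galois}(ii)) and that $\beta(0)=0$, so that a meet being $0$ in $B$ transports, after applying $\beta$, to a meet being $0$ in $A$. Parts (i) and (ii) will use only these two facts together with retractability; parts (iii) and (iv) will additionally invoke the (cyclically) essential property.

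For (i) I would take $c\in[0,b']$ with $(b\wedge b')\wedge c=0$; since $c\le b'$ this is just $b\wedge c=0$, so applying $\beta$ gives $\beta(b)\wedge\beta(c)=0$. As $\beta(c)\le\beta(b')$, the essentiality of $\beta(b)$ in $[0,\beta(b')]$ yields $\beta(c)=0$, and retractability gives $c=0$. Part (ii) is the symmetric argument on the other side: from $d\le\alpha(a')$ with $\alpha(a)\wedge d=0$, apply $\beta$ and use $a\le\beta\alpha(a)$ to obtain $a\wedge\beta(d)=0$; since $a'$ is Galois, $\beta(d)\le\beta\alpha(a')=a'$, so essentiality of $a$ in $[0,a']$ forces $\beta(d)=0$, and retractability again gives $d=0$.

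Parts (iii) and (iv) bring in the essential property. For (iii) the core computation is: given $e\le\beta(b')$ with $\beta(b)\wedge e=0$, set $f=b\wedge\alpha(e)$; since $\beta$ preserves meets and $e\le\beta\alpha(e)$, one gets $\beta(f)\wedge e=\beta(b)\wedge e=0$ with $\beta(f)\le\beta\alpha(e)$. Essentiality of $e$ in $[0,\beta\alpha(e)]$ then gives $\beta(f)=0$, retractability gives $f=b\wedge\alpha(e)=0$, and essentiality of $b$ in $[0,b']$ (noting $\alpha(e)\le\alpha\beta(b')\le b'$) forces $\alpha(e)=0$, whence $e\le\beta\alpha(e)=\beta(0)=0$. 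For (iv) I would take $g\le a'$ with $a\wedge g=0$, apply Lemma \ref{l:clgal}(iii) to get $\alpha(a)\wedge\alpha(g)=0$, use essentiality of $\alpha(a)$ in $[0,\alpha(a')]$ together with $\alpha(g)\le\alpha(a')$ to get $\alpha(g)=0$, and conclude $g\le\beta\alpha(g)=\beta(0)=0$.

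The main obstacle is handling the \emph{cyclically} essential variants of (iii) and (iv), where the essentiality hypothesis and the cyclic form of Lemma \ref{l:clgal}(iii) are available only for cyclic elements. There the plan is to test essentiality on cyclic elements only, invoking Lemma \ref{l:ess}(ii); this first requires checking that the relevant subintervals $[0,\beta(b')]$ and $[0,a']$ remain cyclically generated, which holds because any element of $A$ below $\beta(b')$ (respectively $a'$) is a join of cyclic elements of $A$ lying below it, and such elements stay cyclic inside the subinterval since the relevant lower intervals are unchanged. Once the test element $e$ (respectively $g$) is cyclic in $A$, the cyclically essential property and the cyclic form of Lemma \ref{l:clgal}(iii) apply, and the arguments above go through unchanged.
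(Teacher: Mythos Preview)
Your proof is correct. Parts (i) and (iii) are essentially identical to the paper's arguments (you make explicit the passage to cyclic test elements via Lemma~\ref{l:ess}(ii) and the fact that $[0,\beta(b')]$ inherits cyclic generation, which the paper leaves implicit). The only genuine difference is organisational: for (ii) and (iv) the paper reduces to (i) and (iii) respectively---in (ii) by observing that $a\le\beta\alpha(a)\le\beta\alpha(a')=a'$ so that $\beta\alpha(a)$ is essential in $[0,\beta\alpha(a')]$ and then applying (i); in (iv) by applying (iii) to get $\beta\alpha(a)$ essential in $[0,\beta\alpha(a')]$, then using transitivity and Lemma~\ref{l:ess}(i). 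Your direct arguments are equally short and avoid the detour; in particular your treatment of (iv) via Lemma~\ref{l:clgal}(iii) (with both $a$ and the cyclic test element $g$ cyclic in the parenthetical case) is slightly cleaner than the paper's reduction.
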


\begin{proof} (i) Let $b''\in B$ be such that $b''\leq b'$ and $(b\wedge b')\wedge b''=0$. Then $b\wedge b''=0$. Since
$\beta(0)=0$, we have $\beta(b)\wedge \beta(b'')=0$. Since $\beta(b)$ is essential in $[0,\beta(b')]$ and
$\beta(b'')\leq \beta(b')$, it follows that $\beta(b'')=0$. Since $(\alpha,\beta)$ is retractable, we have $b''=0$.
This shows that $b\wedge b'$ is essential in $[0,b']$. 

(ii) We have $a\leq \beta\alpha(a)\leq \beta\alpha(a')=a'$. Since $a$ is essential in $[0,a']$, Lemma \ref{l:ess}
implies that $\beta\alpha(a)$ is essential in $[0,\beta\alpha(a')]$. Then $\alpha(a)$ is essential in $[0,\alpha(a')]$
by (i).

(iii) Let $a\in A$ be (cyclic) such that $a\leq \beta(b')$ and $\beta(b)\wedge a=0$. Since $(\alpha,\beta)$ is
(cyclically) essential, it folows that $\beta(b)\wedge \beta\alpha(a)=0$, and so $\beta(b\wedge \alpha(a))=0$. Since
$(\alpha,\beta)$ is retractable, $b\wedge \alpha(a)=0$. Then $b'\wedge \alpha(a)=0$, because $b$ is essential in
$[0,b']$. Since $\alpha(a)\leq b'$, it follows that $\alpha(a)=0$. Then $a\leq \beta\alpha(a)=0$, and
so $a=0$. This shows that $\beta(b)$ is essential in $[0,\beta(b')]$.

(iv) Since $(\alpha,\beta)$ is (cyclically) essential, $a$ is essential in $[0,\beta\alpha(a)]$. Also, $\alpha(a)$
essential in $[0,\alpha(a')]$ and (iii) imply that $\beta\alpha(a)$ is essential in $[0,\beta\alpha(a')]$. By Lemma
\ref{l:ess}, it follows that $a$ is essential in $[0,\beta\alpha(a')]$, and furthermore, $a\wedge a'$ is essential in
$[0,a']$.
\end{proof}

\section{Examples}

Let us see some relevant examples illustrating the above theory. For properties of essential, closed or unique closure
subgroups of abelian groups the reader is referred to \cite{CO,CS}.

\begin{ex} \rm \label{e:ex} (1) Consider the abelian group $G=\mathbb{Z}_{p^2}\times \mathbb{Z}_{q^2}$ for some primes
$p$ and $q$ with $p\neq q$, where $\mathbb{Z}_n$ denotes the cyclic group of
order $n\in \mathbb{N}$. The subgroup lattice $L(G)$ of $G$ is given by the left hand side diagram: 
\begin{scriptsize} 
\[\SelectTips{cm}{}
\xymatrix{
 && G \ar@{-}[dl] \ar@{-}[dr] & & &&&&& \\ 
& H_6 \ar@{-}[dl] \ar@{-}[dr] & & H_7 \ar@{-}[dl] \ar@{-}[dr] & & &&& G \ar@{-}[dl] \ar@{-}[d] \ar@{-}[dr] &\\
H_3 \ar@{-}[dr] & & H_4 \ar@{-}[dl] \ar@{-}[dr] & & H_5 \ar@{-}[dl] &&& H_4 \ar@{-}[dl] \ar@{-}[d] \ar@{-}[dr] & H_5
\ar@{-}[d] & H_6 \ar@{-}[dl]\\
& H_1 \ar@{-}[dr] & & H_2 \ar@{-}[dl] &&& H_1 \ar@{-}[dr] & H_2 \ar@{-}[d] & H_3 \ar@{-}[dl] \\
 & & 0 & & &&& 0 & &   
}\]
\end{scriptsize} 

Consider the functions $\alpha:L(G)\to L(G)$ defined by $\alpha(0)=0$, $\alpha(H_1)=H_1$, $\alpha(H_2)=\alpha(H_5)=H_2$,
$\alpha(H_3)=H_1$, $\alpha(H_4)=\alpha(H_6)=\alpha(H_7)=\alpha(G)=H_4$, $\beta:L(G)\to L(G)$ defined by $\beta(0)=0$,
$\beta(H_1)=\beta(H_3)=H_3$, $\beta(H_2)=\beta(H_5)=H_5$, $\beta(H_4)=\beta(H_6)=\beta(H_7)=\beta(G)=G$. Then
$(\alpha,\beta)$ is a Galois connection from the lattice $(L(G),\subseteq)$ to itself. For every $H\in
\{H_0,H_3,H_5,G\}$ we have $\beta\alpha(H)=H$. Hence $0$, $H_3$, $H_5$ and $G$ are Galois elements in $A=L(G)$.
Also, $H_1$ is essential in $[0,\beta\alpha(H_1)]=[0,H_3]$, $H_2$ is essential in $[0,\beta\alpha(H_2)]=[0,H_5]$, $H_2$
is essential in $[0,\beta\alpha(H_4)]=[0,G]$, $H_6$ is essential in $[0,\beta\alpha(H_6)]=[0,G]$, and $H_7$ is essential
in $[0,\beta\alpha(H_7)]=[0,G]$. On the other hand, for every $H\in \{0,H_1,H_2,H_4\}$ we have
$\alpha\beta(H)=H$. Hence $0$, $H_1$, $H_2$ and $H_4$ are Galois elements in $B=L(G)$. Also, $\alpha\beta(H_3)=H_1$ is
essential in $[0,H_3]$, $\alpha\beta(H_5)=H_2$ is essential in $[0,H_5]$, $\alpha\beta(H_6)=H_4$ 
is essential in $[0,H_6]$, $\alpha\beta(H_7)=H_4$ is essential in $[0,H_7]$ and $\alpha\beta(G)=H_4$ is
essential in $[0,G]$. Moreover, for every closed $H\in L(G)$, that is, $H\in \{0,H_3,H_5,G\}$, $H$ is the unique closure
of $\alpha\beta(H)$ in $L(G)$. Hence $(\alpha,\beta)$ is an essential retractable UC Galois connection. Note that $H_3$
is a closed element, but not a Galois element in $B=L(G)$. Hence not every closed element of $B$ is Galois.

(2)  Consider the abelian group $G=\mathbb{Z}_2\times \mathbb{Z}_4$. The subgroup lattice $L(G)$ of $G$ is given by the
right hand side of the above diagram.

Consider the functions $\alpha:L(G)\to L(G)$ defined by $\alpha(H_6)=H_3$, $\alpha(G)=H_4$ and
$\alpha(H)=H$ for every $H\in L(G)\setminus \{H_6,G\}$, and $\beta:L(G)\to L(G)$ defined by
$\beta(H_3)=H_6$, $\beta(H_4)=G$, and $\beta(H)=H$ for every $H\in L(G)\setminus \{H_3,H_4\}$. Then
$(\alpha,\beta)$ is a Galois connection from the lattice $(L(G),\subseteq)$ to itself. For every $H\in
L(G)\setminus \{H_3,H_4\}$ we have $\beta\alpha(H)=H$. Also, $H_3$ is essential in
$[0,\beta\alpha(H_3)]=[0,H_6]$ and $H_4$ is essential in $[0,\beta\alpha(H_4)]=[0,G]$. Hence
$(\alpha,\beta)$ is essential retractable. But $(\alpha,\beta)$ is not UC, because for the closed subgroup $H_6$,
$\alpha\beta(H_6)=H_3$ has two closures in $G$, namely $H_5$ and $H_6$. 

(3) In general, if $A$ is uniform, $B$ is uniform or $B$ is UC, then $(\alpha,\beta)$ is clearly essential, retractable
or UC respectively. But none of the converse implications holds. For the first two, a counterexample is given by (1),
and for the last one, take $A=B=L(G)$ with $G=\mathbb{Z}_2\times \mathbb{Z}_4$, and $\alpha$, $\beta$ the identity
maps. 
\end{ex}

\begin{ex} \rm (1) Consider the following lattices, denoted by $A$, $B$ and $C$ respectively:
\begin{scriptsize} 
\[\SelectTips{cm}{}
\xymatrix{
 & 1 \ar@{-}[dl] \ar@{-}[d] \ar@{-}[dr] & &&& 1 \ar@{-}[dl] \ar@{-}[d] \ar@{-}[dr] & &&& 1 \ar@{-}[ddl] \ar@{-}[dr] &&
\\ 
a_7 \ar@{-}[d] & a_6 \ar@{-}[dl] & a_5 \ar@{-}[dl] \ar@{-}[dd] && b_6 \ar@{-}[d] & b_5 \ar@{-}[dl] & b_4 \ar@{-}[dd]
&&&& c_5 \ar@{-}[dl] \ar@{-}[d] \ar@{-}[dr] & \\
a_4 \ar@{-}[dr] & a_3 \ar@{-}[d] & & & b_3 \ar@{-}[dr] & & && c_1 \ar@{-}[dr] & c_2 \ar@{-}[d] & c_3 \ar@{-}[dl] & c_4
\ar@{-}[dll] \\
 & a_1 \ar@{-}[d] & a_2 \ar@{-}[dl] & & & b_1 \ar@{-}[d] & b_2 \ar@{-}[dl] && & 0 && \\
 & 0 & & & & 0 & &&&&& \\
}\]
\end{scriptsize} 

Consider the functions $\alpha:A\to B$ defined by $\alpha(0)=0$, $\alpha(a_1)=b_1$, $\alpha(a_2)=b_4$,
$\alpha(a_3)=1$, $\alpha(a_4)=b_1$, $\alpha(a_5)=1$, $\alpha(a_6)=1$, $\alpha(a_7)=b_3$ and $\alpha(1)=1$,
$\beta:B\to A$ defined by $\beta(0)=0$, $\beta(b_1)=a_4$, $\beta(b_2)=0$, $\beta(b_3)=a_7$, $\beta(b_4)=a_2$,
$\beta(b_5)=a_7$, $\beta(b_6)=a_7$ and $\beta(1)=1$. Then $(\alpha,\beta)$ is a Galois connection between $A$ and $B$.
But $(\alpha,\beta)$ is neither essential, nor retractable, nor UC. Indeed, for instance, $a_3$ is not essential in
$[0,\beta\alpha(a_3)]=[0,1]$, $0=\alpha\beta(b_2)$ is not essential in $b_2$, and $\alpha\beta(b_5)=b_3$ has closures
$b_5$ and $b_6$. 

Note that $a_4=\beta\alpha(a_4)$ is a Galois element, but not a closed element in $A$, and $b_3=\alpha\beta(b_3)$ is a
Galois element, but not a closed element in $B$. Also, $\alpha$ and $\beta$ do not preserve essentiality. For
instance, $a_4$ is essential in $[0,a_6]$, but $\alpha(a_4)=b_1$ is not essential in $[0,\alpha(a_6)]=[0,1]$; $b_2$ is
essential in $[0,b_4]$, but $\beta(b_2)=0$ is not essential in $[0,\beta(b_4)]=[0,a_2]$. 

(2) Consider the same functions $\alpha,\beta$ as in (1) except for the values $\alpha(a_2)=b_2$ and
$\beta(b_2)=a_2$. Then $(\alpha,\beta)$ is a Galois connection between $A$ and $B$, which is still neither essential
nor UC. But it is retractable by Lemma \ref{l:clgal}.

(3) Consider the above lattice $C$ and note that its cyclic elements are $0,c_1,c_2,c_3$ and $c_4$. Consider the
functions $\alpha,\beta:C\to C$ defined by $\alpha(0)=0$, $\alpha(c_i)=c_i$ for every $i\in \{1,2,3,4\}$,
$\alpha(c_5)=\alpha(1)=c_5$, and $\beta(0)=0$, $\beta(c_i)=c_i$ for every $i\in \{1,2,3,4\}$, $\beta(c_5)=\beta(1)=1$.
Then $(\alpha,\beta)$ is a cyclically essential Galois connection between $C$ and itself. But $(\alpha,\beta)$ is not
essential, because $c_5$ is not essential in $[0,\beta\alpha(c_5)]=[0,1]$. 
\end{ex}

Now we recall some relevant Galois connections between submodule lattices, previously pointed out in the
literature (e.g., see \cite{AN}). Throughout the rest of the paper we shall freely use the following notation. 

Let $R$ be an associative ring with (non-zero) identity. Let $M$ and $N$ be two right $R$-modules, and denote
$U=\Hom_R(M,N)$, $S=\End_R(M)$ and $T=\End_R(N)$. Then $_TU_S$ is a bimodule. 

For every submodule $X$ of $M_R$ and every submodule $Z$ of ${}_TU$, we denote: 
\[l_U(X)=\{f\in U\mid X\subseteq {\rm Ker}(f)\}, \quad r_M(Z)=\bigcap_{f\in Z}{\rm Ker}(f).\]

For every submodule $Y$ of $N_R$ and every submodule $Z$ of $U_S$, we denote: 
\[l'_U(Y)=\{f\in U\mid {\rm Im}(f)\subseteq Y\}, \quad r'_N(Z)=\sum_{f\in Z}{\rm Im}(f).\]

\begin{thm} {\rm \cite[Proposition~3.4]{AN}} $(r_M,l_U)$ is a Galois connection between the submodule lattices
$L({}_TU)$ and $L(M_R)^{\rm op}$, and $(r'_N,l'_U)$ is a Galois connection between $L(U_S)$ and $L(N_R)$.  
\end{thm}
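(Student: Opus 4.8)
The plan is to verify, for each pair, the defining biconditional of a Galois connection, namely $\alpha(a)\leq b$ if and only if $a\leq\beta(b)$. Monotonicity of both maps then comes for free: setting $b=\alpha(a)$ in the biconditional yields the unit $a\leq\beta\alpha(a)$, and a larger argument therefore produces a larger value, so it suffices to concentrate on the equivalence itself. For the first pair I set $A=L({}_TU)$, $B=L(M_R)^{\rm op}$, $\alpha=r_M$ and $\beta=l_U$. First I would confirm that these are well-defined maps of the asserted type. The element $r_M(Z)=\bigcap_{f\in Z}{\rm Ker}(f)$ is an intersection of $R$-submodules of $M$, hence an $R$-submodule. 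The set $l_U(X)=\{f\in U\mid X\subseteq{\rm Ker}(f)\}$ is closed under addition and, crucially, under the left $T$-action $(tf)(m)=t(f(m))$, since for $x\in X$ one has $(tf)(x)=t(f(x))=t(0)=0$; thus $l_U(X)$ is a submodule of ${}_TU$. This is the one point where the bimodule structure ${}_TU_S$ is genuinely used.

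The heart of the argument is that both sides of the adjunction unfold to the same elementary condition. Unwinding the opposite order on $B=L(M_R)^{\rm op}$, the relation $r_M(Z)\leq X$ in $B$ means $X\subseteq r_M(Z)=\bigcap_{f\in Z}{\rm Ker}(f)$ under ordinary inclusion, which holds exactly when $X\subseteq{\rm Ker}(f)$ for every $f\in Z$. On the other hand, $Z\subseteq l_U(X)$ says precisely that every $f\in Z$ satisfies $X\subseteq{\rm Ker}(f)$. Hence $r_M(Z)\leq X$ in $B$ if and only if $Z\subseteq l_U(X)$, which is exactly the Galois connection condition.

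The second pair is handled in complete analogy, now with $A=L(U_S)$ and $B=L(N_R)$ carrying its \emph{ordinary} order (no opposite), $\alpha=r'_N$ and $\beta=l'_U$. Here $r'_N(Z)=\sum_{f\in Z}{\rm Im}(f)$ is a sum of $R$-submodules of $N$, hence an $R$-submodule, and $l'_U(Y)=\{f\in U\mid{\rm Im}(f)\subseteq Y\}$ is closed under the right $S$-action $(fs)(m)=f(s(m))$ because ${\rm Im}(fs)\subseteq{\rm Im}(f)\subseteq Y$, so it is a submodule of $U_S$. The adjunction once more collapses to a single condition: $r'_N(Z)\subseteq Y$ says $\sum_{f\in Z}{\rm Im}(f)\subseteq Y$, equivalently ${\rm Im}(f)\subseteq Y$ for all $f\in Z$, which is exactly $Z\subseteq l'_U(Y)$.

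The content is almost entirely bookkeeping, and the only real subtlety—the place where I would slow down—is getting the orders right. The opposite order on $L(M_R)$ in the first pair is forced: passing to kernels and intersecting reverses inclusions, since a larger $Z$ yields a smaller $r_M(Z)$, and taking the opposite order is precisely what turns $r_M$ and $l_U$ into monotone maps. In the second pair the image/sum construction preserves inclusions, so no opposite is needed and $L(N_R)$ keeps its usual order. Once the correct orders are fixed, both Galois connections follow at once from the two displayed equivalences.
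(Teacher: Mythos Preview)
Your proof is correct and is exactly the standard direct verification of the adjunction biconditional; the observation that monotonicity follows automatically from the biconditional is a well-known fact, so nothing is missing. The paper itself does not prove this theorem but merely cites it from \cite[Proposition~3.4]{AN}, so there is no argument in the paper to compare against; your elementary verification is precisely what one would supply if asked to fill in the details.
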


Let $\sigma[M]$ be the full subcategory of the category of right $R$-modules consisting of all submodules of
$M$-generated modules. 

A module $N\in \sigma[M]$ is called \emph{$M$-retractable} if $\Hom_R(M,D)\neq 0$ for every non-zero submodule $D$ of
$N_R$. A module $M\in \sigma[N]$ is called \emph{$N$-coretractable} if $\Hom(M/C,N)\neq 0$ for every proper submodule
$C$ of $M_R$. 

A module $M\in \sigma[N]$ is called \emph{$N$-semi-projective} if for every submodule $D$ of $N_R$, every epimorphism
$g:N\to D$ and every homomorphism $\beta:M\to D$, there exists a homomorphism $\gamma:M\to N$ such that
$g\gamma=\beta$. A module $N\in \sigma[M]$ is called \emph{$M$-semi-injective} if for every submodule $C$ of $M_R$,
every monomorphism $f:M/C\to M$ and every homomorphism $\alpha:M/C\to N$, there exists a homomorphism $\gamma:M\to N$
such that $\gamma f=\alpha$. Taking $N=M$ one obtains the notions of semi-projectivity and semi-injectivity (e.g., see
\cite[4.20]{CLVW} and \cite[p. 261]{Wis}).

\begin{thm} \label{t:retcoret} (i) $N$ is $M$-retractable if and only if $(r'_N,l'_U)$ is retractable.

(ii) $M$ is $N$-coretractable if and only if $(r_M,l_U)$ is retractable.

(iii) If $M$ is $N$-semi-projective, then $(r'_N,l'_U)$ is cyclically essential.  

(iv) If $N$ is $M$-semi-injective, then $(r_M,l_U)$ is cyclically essential.  

(v) If $N$ is $M$-faithful, then $(r_M,l_U)$ is essential retractable UC. 
\end{thm}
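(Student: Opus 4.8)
The plan is to translate each of the five statements into the intrinsic lattice conditions defining (cyclically) essential, retractable and UC Galois connections, and then to decode those conditions through the standard module-theoretic identifications. The computations I would set up first are: $l'_U(Y)$ is canonically $\Hom_R(M,Y)$ (the maps $M\to N$ with image in $Y$) and $l_U(X)$ is canonically $\Hom_R(M/X,N)$ (those with $X$ inside the kernel); and, on principal submodules, $r'_N(fS)={\rm Im}(f)$ and $r_M(Tf)={\rm Ker}(f)$, since $fs$ (resp. $gf$) ranges exactly over the maps with image inside ${\rm Im}(f)$ (resp. kernel containing ${\rm Ker}(f)$). Each of these is a one-line check from the definitions of $r_M,l_U,r'_N,l'_U$.

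For (i) and (ii) I would simply invoke Lemma~\ref{l:clgal}(ii): a Galois connection $(\alpha,\beta)$ is retractable exactly when $\beta(b)=0$ forces $b=0$. For $(r'_N,l'_U)$ this reads ``$l'_U(Y)=0\Rightarrow Y=0$'', and via $l'_U(Y)=\Hom_R(M,Y)$ this is precisely the statement that $\Hom_R(M,D)\ne 0$ for every nonzero $D\le N$, i.e. $M$-retractability. For $(r_M,l_U)$ the codomain is the opposite lattice $L(M_R)^{\rm op}$, whose least element is $M$; hence the criterion reads ``$l_U(X)=0\Rightarrow X=M$'', and via $l_U(X)=\Hom_R(M/X,N)$ this is exactly ``$\Hom_R(M/X,N)\ne 0$ for every proper $X\le M$'', i.e. $N$-coretractability. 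Since Lemma~\ref{l:clgal}(ii) is an equivalence, both directions of (i) and (ii) drop out at once; the only point demanding care is remembering that the bottom of $L(M_R)^{\rm op}$ is $M$.

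For (iii) and (iv) I would verify cyclic essentiality on the cyclic submodules $a=fS$ (resp. $a=Tf$), which are the cyclic elements of the lattice in this setting. Using the computations above, $\beta\alpha(fS)=l'_U({\rm Im}(f))=\{h\mid {\rm Im}(h)\subseteq {\rm Im}(f)\}$ and $\beta\alpha(Tf)=l_U({\rm Ker}(f))=\{h\mid {\rm Ker}(f)\subseteq {\rm Ker}(h)\}$, so the task reduces to showing $fS$ (resp. $Tf$) is essential in that interval. I would take a nonzero $h$ with ${\rm Im}(h)\subseteq {\rm Im}(f)$ (resp. ${\rm Ker}(f)\subseteq {\rm Ker}(h)$) and use exactly the relative lifting furnished by $N$-semi-projectivity to factor the corestriction of $h$ through $f$ (resp. the extension furnished by $M$-semi-injectivity to write $h=gf$ with $g\in T$), thereby placing a nonzero element of $fS$ (resp. $Tf$) inside the submodule generated by $h$ and so forcing a nonzero intersection. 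The delicate point is to present the epimorphism (resp. monomorphism) demanded by the definition as the corestriction of $f$ (resp. the map induced on $M/{\rm Ker}(f)$), which is where $M\in\sigma[N]$ (resp. $N\in\sigma[M]$) is used.

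Part (v) is the main obstacle, since it asserts the full essential, retractable and UC properties simultaneously. I would establish the three separately. Retractability follows from (ii) once one checks that $M$-faithfulness entails $N$-coretractability, i.e. that $l_U(X)=0$ forces $X=M$. For essentiality I would exploit that $M$-faithfulness makes $\Hom_R(M,-)$ separate submodules of $M$, so that given $a\le {}_TU$ and a putative $Z'$ with $Z'\wedge a=0$ inside $[0,l_Ur_M(a)]$, faithfulness is precisely what forces $Z'=0$. The genuinely hard step is UC: I must show that every closed element $b$ of $L(M_R)^{\rm op}$ is the unique closure of $r_Ml_U(b)$. Here I would combine Lemma~\ref{l:clgal}(i) (essentiality turns closed elements into Galois elements), the transfer of essentiality across the connection in Lemma~\ref{l:transfer}, the closure properties in Lemma~\ref{l:ess}, and the modularity of submodule lattices, to identify the closure of $r_Ml_U(b)$ and verify its uniqueness. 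I expect the bookkeeping forced by the opposite lattice $L(M_R)^{\rm op}$, and the correct translation of ``closed'' and ``closure'' across that duality, to be the most error-prone part of the entire argument.
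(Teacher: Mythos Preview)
Your treatment of (i) and (ii) matches the paper's exactly: both simply invoke Lemma~\ref{l:clgal}(ii) and translate.

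For (iii) and (iv) your approach is correct but roundabout compared with the paper. You propose to verify essentiality of $fS$ in $[0,l'_Ur'_N(fS)]$ (and dually) by producing nonzero intersections. The paper instead observes that semi-projectivity (resp.\ semi-injectivity) forces the cyclic elements to be \emph{Galois}: one has $l'_Ur'_N(Z)=\Hom_R(M,ZM)=Z$ for every cyclic $Z\le U_S$ (resp.\ $l_Ur_M(Tf)=Tf$), so that $\beta\alpha(a)=a$ and essentiality is trivial. Your lifting argument, carried to its conclusion, actually yields $h\in fS$ outright (not just a nonzero intersection of $hS$ with $fS$), so you are implicitly proving the same Galois property; it would be cleaner to state it that way and skip the essentiality detour.

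For (v) the paper takes a completely different route: it does not argue internally at all, but simply cites \cite[Proposition~1.1]{Z}. Your plan to derive essentiality, retractability and UC separately from the lemmas of the paper is more ambitious, and the sketch for retractability and essentiality is reasonable. The UC step is, as you suspect, the crux, and your outline there is too vague to be convincing: you will need the precise content of $M$-faithfulness (which the paper never spells out) to pin down uniqueness of closures in $L(M_R)^{\rm op}$, and the lemmas you list do not by themselves single out a \emph{unique} closure. If you pursue this route, be aware that the applications the paper draws from (v) (Corollaries~\ref{c:uhdim}(v) and the Zelmanowitz bijection) concern the pair $(r'_N,l'_U)$ and the lattices $L(U_S)$, $L(N_R)$, so you should double-check which connection the faithfulness hypothesis is really controlling before investing effort in the $(r_M,l_U)$ version.
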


\begin{proof} (i), (ii) These follow by Lemma \ref{l:clgal}.

(iii) If $M$ is $N$-semi-projective, then $\Hom_R(M,ZM)=Z$ for every cyclic submodule $Z$ of $U_S$, hence
$l'_Ur'_N(Z)=Z$ for every cyclic submodule $Z$ of $U_S$.

(iv) If $N$ is $M$-semi-injective, then $l_Ur_M(f)=Tf$ for every $f\in {}_TU$, hence $l_Ur_M(Z)=Z$ for every cyclic
submodule $Z$ of ${}_TU$.

(v) This follows by \cite[Proposition~1.1]{Z}.
\end{proof}

\section{Uniform dimension}

Let $X$ be a bounded modular lattice. Recall that a subset $Y$ of $X\setminus \{0\}$ is called
\emph{join-independent} if $(y_1\vee \ldots \vee y_n)\wedge x=0$ for every finite subset $\{y_1,\dots,y_n\}$ of $Y$
and every $x\in Y\setminus \{y_1,\dots,y_n\}$. If there is a finite supremum $d$ of all numbers $k$ such that $X$
has a join-independent subset with $k$ elements, then $X$ \emph{has uniform dimension} (or \emph{Goldie dimension}) $d$;
otherwise $X$ has infinite uniform dimension (see \cite[Theorem~5]{GP}). We denote the uniform dimension of $X$
by ${\rm udim}(X)$. The dual notion is that of \emph{hollow dimension} (or \emph{dual Goldie dimension}) of $X$, which 
is denoted by ${\rm hdim}(X)$.

Let $(\alpha,\beta)$ be a Galois connection between two bounded modular lattices $A$ and $B$. In general, $\beta$ does
not preserve suprema. But the following weaker property turns out to be useful. Inspired by the behaviour of additive
functors in additive categories, we say that $\beta:B\to A$ is \emph{additive} if $\beta(b\vee b')=\beta(b)\vee
\beta(b')$ for every $b,b'\in B$ with $b\wedge b'=0$. Note that if $\beta$ is additive, then $\beta$ preserves
complements.

\begin{thm} \label{t:udim} Let $(\alpha,\beta)$ be a retractable Galois connection between two bounded modular lattices
$A$ and $B$ such that $\beta(0)=0$. Then:

(i) ${\rm udim}(B)\leq {\rm udim}(A)$. 

(ii) If $(\alpha,\beta)$ is essential, then ${\rm udim}(A)={\rm udim}(B)$. 

(iii) If $A$ is cyclically generated, $(\alpha,\beta)$ is cyclically essential and $\beta$ is additive, then ${\rm
udim}(A)={\rm udim}(B)$. 
\end{thm}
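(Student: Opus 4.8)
The plan is to prove both inequalities by transporting finite join-independent subsets across the connection, using throughout that $\alpha$ preserves all joins and $\beta$ preserves all meets (Lemma \ref{l:galois}(ii)) and that $\beta(0)=0$.

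For (i) I would take an arbitrary finite join-independent subset $\{b_1,\dots,b_n\}$ of $B$ and show that $\{\beta(b_1),\dots,\beta(b_n)\}$ is a join-independent subset of $A$ with the same number of elements. Each $\beta(b_i)$ is nonzero by retractability (Lemma \ref{l:clgal}(ii), since $b_i\neq 0$), and the elements are distinct because $\beta(b_i)\wedge\beta(b_j)=\beta(b_i\wedge b_j)=\beta(0)=0$ for $i\neq j$. For the independence condition I would use order-preservation to get $\bigvee_{i\in S}\beta(b_i)\leq \beta(\bigvee_{i\in S}b_i)$, whence $(\bigvee_{i\in S}\beta(b_i))\wedge\beta(b_m)\leq \beta((\bigvee_{i\in S}b_i)\wedge b_m)=\beta(0)=0$. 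Since this works for every finite join-independent subset, the set of admissible sizes for $B$ is contained in that for $A$, giving ${\rm udim}(B)\leq {\rm udim}(A)$ and covering the infinite case.

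For (ii), under the essential hypothesis I would transport in the opposite direction via $\alpha$: given join-independent $\{a_1,\dots,a_n\}$ in $A$, I claim $\{\alpha(a_1),\dots,\alpha(a_n)\}$ is join-independent in $B$. Here $\alpha$ preserves joins, so $\bigvee_{i\in S}\alpha(a_i)=\alpha(\bigvee_{i\in S}a_i)$, and the disjointness $(\bigvee_{i\in S}a_i)\wedge a_m=0$ transfers to $\alpha(\bigvee_{i\in S}a_i)\wedge\alpha(a_m)=0$ by Lemma \ref{l:clgal}(iii). Nonzero-ness and distinctness of the $\alpha(a_i)$ follow since $a_i$ essential in $[0,\beta\alpha(a_i)]$ forces $\beta\alpha(a_i)\neq 0$, hence $\alpha(a_i)\neq 0$, and Lemma \ref{l:clgal}(iii) again rules out coincidences. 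Combined with (i) this yields equality.

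For (iii) only cyclic essentiality is available, so Lemma \ref{l:clgal}(iii) applies solely to cyclic pairs, while the join $\bigvee_{i\in S}a_i$ is typically not cyclic; this is exactly where the argument of (ii) breaks and where the work lies. The plan is: since $A$ is cyclically generated, replace each $a_i$ by a nonzero cyclic $c_i\leq a_i$, obtaining a join-independent set of cyclic elements of the same size, and then prove by induction on $k$ that $\{\alpha(c_1),\dots,\alpha(c_k)\}$ is join-independent. The inductive step requires $\alpha(d_k)\wedge\alpha(c_{k+1})=0$, where $d_k=c_1\vee\cdots\vee c_k$; by retractability and $\beta$ preserving meets this is equivalent to $\beta\alpha(d_k)\wedge\beta\alpha(c_{k+1})=0$, and two applications of essentiality — of $c_{k+1}$ in $[0,\beta\alpha(c_{k+1})]$ and of $d_k$ in $[0,\beta\alpha(d_k)]$ — reduce it to $d_k\wedge c_{k+1}=0$, which holds by independence. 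The crux, and the step I expect to be the main obstacle, is establishing that $d_k$ is essential in $[0,\beta\alpha(d_k)]$: for this I would first use additivity of $\beta$, extended from pairs to join-independent families by induction, together with the inductive hypothesis and the transport of (i) (which makes $\{\beta\alpha(c_i)\}$ join-independent) to obtain $\beta\alpha(d_k)=\bigvee_{i\leq k}\beta\alpha(c_i)$; then I would invoke the standard modular-lattice fact that if $\{y_1,\dots,y_k\}$ is join-independent and each $x_i\leq y_i$ is essential in $[0,y_i]$, then $\bigvee x_i$ is essential in $[0,\bigvee y_i]$, applied with $x_i=c_i$ and $y_i=\beta\alpha(c_i)$. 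That fact itself follows by induction from the building block that $v\wedge y=0$ and $x$ essential in $[0,y]$ imply $v\vee x$ essential in $[0,v\vee y]$, together with the transitivity of essentiality in Lemma \ref{l:ess}(i). This gives the needed essentiality, so $\{\alpha(c_1),\dots,\alpha(c_n)\}$ is a join-independent subset of $B$ of size $n$, and with (i) we conclude ${\rm udim}(A)={\rm udim}(B)$.
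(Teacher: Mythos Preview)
Your proposal is correct and follows essentially the same route as the paper. Parts (i) and (ii) match the paper almost line for line; in (iii) you organize the induction slightly more explicitly than the paper does (isolating the statement ``$d_k$ is essential in $[0,\beta\alpha(d_k)]$'' as the inductive invariant and invoking the full $k$-term version of the essentiality-of-joins lemma), whereas the paper builds the same chain one step at a time, citing \cite[Lemma~3]{GP} for the pair case and then appealing to induction --- but the ingredients (cyclic replacement, Lemma \ref{l:clgal}(iii) on cyclic pairs, additivity of $\beta$, retractability via Lemma \ref{l:clgal}(ii), and the modular essentiality lemma) are identical.
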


\begin{proof} (i) We prove that if $B$ has a finite join-independent subset with $m$ elements, then so has $A$.
This will imply that ${\rm udim}(B)\leq {\rm udim}(A)$ in both finite and infinite cases for ${\rm udim}(B)$. To this
end, let $\{b_1,\dots,b_m\}$ be a join-independent subset of $B$. We claim that $X=\{\beta(b_1),\dots,\beta(b_m)\}$ is
a join-independent subset of $A$ with $m$ elements. 

Suppose that $\beta(b_i)=\beta(b_j)$ for some $i,j\in \{1,\dots,m\}$ with $i\neq j$. Since $b_i\wedge b_j=0$,
we have $\beta(b_i)\wedge \beta(b_j)=\beta(b_i\wedge b_j)=\beta(0)=0$. We conclude that $\beta(b_i)=\beta(b_j)=0$,
which implies $b_i=0$, because $(\alpha,\beta)$ is retractable. But this is a contradiction. Hence $X$ has $m$
elements.

The join-independence of $\{b_1,\dots,b_m\}$ is known to be equivalent to the condition: $$(b_1\vee \ldots
\vee b_{k-1})\wedge b_k=0, \textrm{ for every } k\in \{2,\dots,m\}.$$ Let $k\in \{2,\dots,m\}$. Then
$\beta(b_1\vee \ldots \vee b_{k-1})\wedge \beta(b_k)=0$. Since $\beta(b_1\vee \ldots \vee b_{k-1})\geq \beta(b_1)\vee
\ldots \vee \beta(b_{k-1})$, it follows that $(\beta(b_1)\vee \ldots \vee \beta(b_{k-1}))\wedge \beta(b_k)=0$. This
shows that $X$ is join-independent.

(ii) Assume that $(\alpha,\beta)$ is also essential. The idea of the proof is the same as for (i), but the technical
part is slightly different and relies heavily on Lemma \ref{l:clgal}. 

We prove that if $A$ has a finite join-independent subset with $n$ elements, then so has $B$.
To this end, let $\{a_1,\dots,a_n\}$ be a join-independent subset of $A$. 

We claim that $Y=\{\alpha(a_1),\dots,\alpha(a_n)\}$ is a join-independent subset of $B$ with $n$ elements. 

Suppose that $\alpha(a_i)=\alpha(a_j)$ for some $i,j\in \{1,\dots,n\}$ with $i\neq j$. Since $a_i\wedge a_j=0$, we have
$\alpha(a_i)\wedge \alpha(a_j)=0$ by Lemma \ref{l:clgal}. We conclude that $\alpha(a_i)=\alpha(a_j)=0$,
which implies $a_i\leq \beta\alpha(a_i)=\beta(0)=0$, because $(\alpha,\beta)$ is retractable. But this is a
contradiction. Hence $Y$ has $n$ elements.

The join-independence of $\{a_1,\dots,a_n\}$ is equivalent to the condition $(a_1\vee \ldots
\vee a_{k-1})\wedge a_k=0$ for every $k\in \{2,\dots,n\}$. Let $k\in \{2,\dots,n\}$. Then $\alpha(a_1\vee \ldots
\vee a_{k-1})\wedge \alpha(a_k)=0$ by Lemma \ref{l:clgal}. Hence $(\alpha(a_1)\vee \ldots \vee
\alpha(a_{k-1}))\wedge \alpha(a_k)=0$. This shows that $Y$ is join-independent.

(iii) Assume that $A$ is cyclically generated, $(\alpha,\beta)$ is cyclically essential and $\beta$ is additive. We need
to slightly modify the proof of (ii). 

Let $\{a_1,\dots,a_n\}$ be a join-independent subset of $A$. Since $A$ is cyclically generated, one may find a
join-independent subset $\{a_1',\dots,a_n'\}$ of $A$ consisting of cyclic elements such that $a_i'\leq a_i$ for every
$i=1,\dots,n$, so we may assume that $\{a_1,\dots,a_n\}$ consists of cyclic elements. As above,
$Y=\{\alpha(a_1),\dots,\alpha(a_n)\}$ has $n$ elements. 

By Lemma \ref{l:clgal}, $\beta\alpha(a_1)\wedge \beta\alpha(a_2)=\beta(\alpha(a_1)\wedge \alpha(a_2))=\beta(0)=0$. Since
$a_1$ is essential in $[0,\beta\alpha(a_1)]$ and $a_2$ is essential in $[0,\beta\alpha(a_2)]$, it follows that $a_1\vee
a_2$ is essential in $[0,\beta\alpha(a_1)\vee \beta\alpha(a_2)]$ by \cite[Lemma~3]{GP}. Since $\beta$ is additive,
$\beta\alpha(a_1)\vee \beta\alpha(a_2)=\beta\alpha(a_1\vee a_2)$ by Lemma \ref{l:clgal}. Now the equality $(a_1\vee
a_2)\wedge a_3=0$ and the essentiality properties imply $\beta\alpha(a_1\vee a_2)\wedge \beta\alpha(a_3)=0$, that is,
$\beta(\alpha(a_1\vee a_2)\wedge \alpha(a_3))=0$. Since $(\alpha,\beta)$ is retractable, we have $\alpha(a_1\vee
a_2)\wedge \alpha(a_3)=0$, and so $(\alpha(a_1)\vee \alpha(a_2))\wedge \alpha(a_3)=0$. By induction, we obtain  
$(\alpha(a_1)\vee \ldots \vee \alpha(a_{k-1}))\wedge \alpha(a_k)=0$ for every $k\in \{2,\dots,n\}$. This shows that $Y$
is join-independent.
\end{proof}

For the following module-theoretic corollary, note that submodule lattices are cyclically generated. Concerning the
Galois connections $(r'_N,l'_U)$ and $(r_M,l_U)$, $l_U'$ is clearly additive, while $l_U$ is additive by
\cite[Lemma~4.9]{AES}. Now Theorems \ref{t:retcoret} and \ref{t:udim} immediately yield the following corollary. 

\begin{cor} \label{c:uhdim} (i) If $N$ is $M$-retractable, then ${\rm udim}(N_R)\leq {\rm
udim}(\Hom_R(M,N)_S)$. 

(ii) If $M$ is $N$-coretractable, then ${\rm hdim}(M_R)\leq {\rm udim}({}_T\Hom_R(M,N))$.

(iii) If $N$ is $M$-retractable and $M$ is $N$-semi-projective, then $${\rm udim}(N_R)={\rm
udim}(\Hom_R(M,N)_S).$$

(iv) If $M$ is $N$-coretractable and $N$ is $M$-semi-injective, then $${\rm hdim}(M_R)={\rm udim}({}_T\Hom_R(M,N)).$$

(v) If $N$ is $M$-faithful, then ${\rm udim}(N_R)={\rm udim}(\Hom_R(M,N)_S)$.
\end{cor}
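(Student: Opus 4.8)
The plan is to derive each of the five assertions mechanically, by feeding the relevant Galois connection supplied by Theorem \ref{t:retcoret} into the matching part of Theorem \ref{t:udim}, and then rewriting the resulting (in)equality of lattice uniform dimensions in module terms. Two connections are involved: $(r'_N,l'_U)$, which lies between $A=L(U_S)$ and $B=L(N_R)$ with $\beta=l'_U$, and $(r_M,l_U)$, which lies between $A=L({}_TU)$ and $B=L(M_R)^{\rm op}$ with $\beta=l_U$. Before invoking Theorem \ref{t:udim} I would record its two ambient hypotheses in each case: submodule lattices are cyclically generated (already noted), and $\beta(0)=0$, which is immediate because $l'_U(0)=\{f\in U\mid {\rm Im}(f)\subseteq 0\}=0$ and, at the bottom element $M$ of $L(M_R)^{\rm op}$, $l_U(M)=\{f\in U\mid M\subseteq {\rm Ker}(f)\}=0$.

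For (i) and (ii) only retractability is used. By Theorem \ref{t:retcoret}(i), $M$-retractability of $N$ makes $(r'_N,l'_U)$ retractable, so Theorem \ref{t:udim}(i) gives ${\rm udim}(B)\le {\rm udim}(A)$, that is ${\rm udim}(N_R)\le {\rm udim}(\Hom_R(M,N)_S)$. In the same way, by Theorem \ref{t:retcoret}(ii), $N$-coretractability of $M$ makes $(r_M,l_U)$ retractable, and Theorem \ref{t:udim}(i) yields ${\rm udim}(L(M_R)^{\rm op})\le {\rm udim}(L({}_TU))$; here the extra step is the identification ${\rm udim}(L(M_R)^{\rm op})={\rm hdim}(M_R)$, which is precisely the definition of hollow dimension, giving ${\rm hdim}(M_R)\le {\rm udim}({}_T\Hom_R(M,N))$.

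For (iii) and (iv) I would pass to the equality case through Theorem \ref{t:udim}(iii), which additionally demands cyclic essentiality and additivity of $\beta$. For (iii), $N$-semi-projectivity of $M$ yields cyclic essentiality of $(r'_N,l'_U)$ by Theorem \ref{t:retcoret}(iii), and $l'_U$ is additive; combined with the retractability from (i) and the cyclic generation of $L(U_S)$, Theorem \ref{t:udim}(iii) gives ${\rm udim}(N_R)={\rm udim}(\Hom_R(M,N)_S)$. For (iv), $M$-semi-injectivity of $N$ yields cyclic essentiality of $(r_M,l_U)$ by Theorem \ref{t:retcoret}(iv), and $l_U$ is additive by \cite[Lemma~4.9]{AES}; with the retractability from (ii) and the cyclic generation of $L({}_TU)$, Theorem \ref{t:udim}(iii) gives ${\rm udim}(L({}_TU))={\rm udim}(L(M_R)^{\rm op})$, which after the same op-lattice translation reads ${\rm hdim}(M_R)={\rm udim}({}_T\Hom_R(M,N))$. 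Finally, for (v), $M$-faithfulness of $N$ makes the relevant connection essential retractable (indeed UC) by Theorem \ref{t:retcoret}(v); this is the only part calling on Theorem \ref{t:udim}(ii), whose application gives the equality of the uniform dimensions of the two lattices, which I would then translate into the asserted module equality.

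The main difficulty is not conceptual but bookkeeping: everything substantive is already contained in Theorems \ref{t:retcoret} and \ref{t:udim}. The step most likely to cause slips is the consistent handling of the opposite lattice $L(M_R)^{\rm op}$ attached to $(r_M,l_U)$, since join-independence there is co-independence in $L(M_R)$; one must keep track that ${\rm udim}$ of the opposite lattice equals ${\rm hdim}(M_R)$, which is exactly what converts the $(r_M,l_U)$ statements (ii) and (iv) into hollow-dimension assertions. The remaining checks, namely $\beta(0)=0$ and the two additivity claims, are either immediate or quoted.
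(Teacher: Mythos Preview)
Your proposal is correct and follows exactly the route the paper takes: the paper's proof is a one-line remark that Theorems~\ref{t:retcoret} and~\ref{t:udim} immediately yield the corollary, after noting that submodule lattices are cyclically generated, that $l'_U$ is clearly additive, and that $l_U$ is additive by \cite[Lemma~4.9]{AES}. You have simply made the bookkeeping explicit---in particular the verification $\beta(0)=0$ in each case and the identification ${\rm udim}(L(M_R)^{\rm op})={\rm hdim}(M_R)$ needed for (ii) and (iv)---but the argument is the same.
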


Taking $N=M$ in Corollary \ref{c:uhdim}, (i)-(iv) yield \cite[Propositions~4.10 and 4.11]{AES} and
\cite[Theorem~2.6]{HV}, while (v) gives the first part of \cite[Corollary~1.3]{Z}.

\section{Correspondence of closed elements}

In this section we establish a bijective correspondence between the sets of closed elements induced by some special 
Galois connections. Our results not only dualize, but extend those from \cite{CIKO}. 

\begin{thm} \label{t:main} Let $(\alpha,\beta)$ be an essential retractable UC Galois connection between two bounded
lattices $A$ and $B$. For $x\in A\cup B$ we denote by $\bar{x}$ the unique closure of $x$, when it does exist. Denote
\begin{align*} \mathcal{A}&=\{a\in A\mid a \textrm{ is closed in $A$ and $\alpha(a)$ has a unique closure in $B$}\}, \\ 
\mathcal{B}&=\{b\in B\mid b \textrm{ is closed in $B$ and $\beta(b)$ has a unique closure in $A$}\}.
\end{align*}
Consider $\varphi:\mathcal{A}\to \mathcal{B}$ defined by $\varphi(a)=\overline{\alpha(a)}$ for every $a\in \mathcal{A}$,
and $\psi:\mathcal{B}\to \mathcal{A}$ defined by $\psi(b)=\overline{\beta(b)}$ for every $b\in \mathcal{B}$.
Then $\varphi$ is a well-defined map. Also, $\psi$ is a well-defined map if and only if $\mathcal{B}$ coincides with the
set of closed elements $b\in B$ such that $\beta(b)$ is closed in $B$. In this case, $\varphi$ and $\psi$ are mutually
inverse bijections. 
\end{thm}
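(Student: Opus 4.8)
The plan is to establish the theorem in three phases: first that $\varphi$ is well-defined, then the characterization of when $\psi$ is well-defined, and finally the mutual inverse property. Throughout, the key technical tool will be Lemma~\ref{l:transfer}, which governs how essentiality transfers through the maps $\alpha$ and $\beta$, together with Lemma~\ref{l:clgal}(i), which guarantees that every closed element of $A$ is Galois.

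To show $\varphi$ is well-defined, I would start with $a\in \mathcal{A}$, so $a$ is closed in $A$ and $\alpha(a)$ has a unique closure $\overline{\alpha(a)}$ in $B$. I must verify that $\overline{\alpha(a)}\in \mathcal{B}$, namely that $\overline{\alpha(a)}$ is closed in $B$ and that $\beta(\overline{\alpha(a)})$ has a unique closure in $A$. Closedness of $\overline{\alpha(a)}$ is immediate from the definition of closure. For the second condition, the natural candidate for the closure of $\beta(\overline{\alpha(a)})$ is $a$ itself. Here I would use that $\alpha(a)$ is essential in $[0,\overline{\alpha(a)}]$, apply Lemma~\ref{l:transfer}(iii) to pass essentiality through $\beta$, obtaining that $\beta\alpha(a)=a$ (using that $a$ is Galois by Lemma~\ref{l:clgal}(i)) is essential in $[0,\beta(\overline{\alpha(a)})]$. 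Since $a$ is already closed, $a$ is the closure of $\beta(\overline{\alpha(a)})$; uniqueness should follow because the UC hypothesis on $(\alpha,\beta)$ forces the closure of $\alpha(a)$, hence the whole chain, to be rigid. This reasoning also computes $\psi(\varphi(a))=\overline{\beta(\overline{\alpha(a)})}=a$, which I would record for the final phase.

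For $\psi$, the subtlety flagged in the statement is that $\mathcal{B}$ need not consist of the right elements unless an extra condition holds. I would argue that $\psi(b)=\overline{\beta(b)}\in \mathcal{A}$ requires $\overline{\beta(b)}$ to be closed (automatic) and $\alpha(\overline{\beta(b)})$ to have a unique closure in $B$, and that the natural closure candidate is $b$ itself. Running the symmetric argument via Lemma~\ref{l:transfer}(ii) and the retractability-plus-essentiality machinery, $\alpha(\beta(b))$ is essential in $[0,b]$ by the retractable hypothesis, and one wants $\alpha(\overline{\beta(b)})$ to have closure $b$. The characterization asserts that $\psi$ lands in $\mathcal{A}$ for every $b\in\mathcal{B}$ precisely when $\mathcal{B}$ equals the set of closed $b$ with $\beta(b)$ closed in $B$; I would prove both directions by tracing exactly which step in the transfer of essentiality breaks down when $\beta(b)$ fails to be closed, and conversely that the condition is exactly what makes $\overline{\beta(b)}=\beta(b)$-type collapses behave, so that $\varphi(\psi(b))=b$.

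The main obstacle I anticipate is the bookkeeping around \emph{uniqueness} of closures rather than their existence: the lattices $A$ and $B$ are only assumed bounded (not modular or UC as lattices), so closures need not exist or be unique in general, and one must carefully carry the membership conditions defining $\mathcal{A}$ and $\mathcal{B}$ through every application of Lemma~\ref{l:transfer}. The delicate point is the equivalence ``$\psi$ well-defined $\Leftrightarrow$ $\mathcal{B}=\{b \text{ closed} \mid \beta(b)\text{ closed in }B\}$'': I expect this to hinge on showing that $\beta(b)$ closed is exactly the condition guaranteeing $\alpha(\overline{\beta(b)})$ has closure $b$, which forces $b\in$ the image of $\varphi$ and makes $\psi$ a genuine inverse. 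Once both composites $\psi\varphi=\mathrm{id}_{\mathcal A}$ and $\varphi\psi=\mathrm{id}_{\mathcal B}$ are verified from the closure computations above, the mutual-bijection conclusion is immediate.
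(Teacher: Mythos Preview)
Your proposal is correct and follows exactly the route the paper intends: the paper's own proof is the single sentence ``This is dual to \cite[Theorem~3.5]{CIKO}, using Lemmas~\ref{l:ess}, \ref{l:clgal} and \ref{l:transfer},'' and your outline is precisely the unpacked dualization via those three lemmas. One small sharpening worth making explicit: in your $\varphi$ well-defined step, the argument actually yields the equality $\beta(\overline{\alpha(a)})=a$ (since $a$ is closed and essential in $[0,\beta(\overline{\alpha(a)})]$), so uniqueness of its closure is trivial rather than a consequence of the UC hypothesis; and note that the phrase ``$\beta(b)$ is closed in $B$'' in the statement is a typo for ``closed in $A$,'' as the proof of Theorem~\ref{t:main2} confirms.
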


\begin{proof} This is dual to \cite[Theorem~3.5]{CIKO}, using Lemmas \ref{l:ess}, \ref{l:clgal} and \ref{l:transfer}.
\end{proof}

For the next theorem, which refines Theorem \ref{t:main} and generalizes the dual of \cite[Theorem~3.6]{CIKO}, we need
the modularity of the lattices involved. We include its proof for the reader's convenience, and for pointing out easier
how to dualize it later on.

\begin{thm} \label{t:main2} Let $(\alpha,\beta)$ be an essential retractable UC Galois connection between two bounded
modular lattices $A$ and $B$. Then there are mutually inverse bijections between the sets $\mathcal{C}_A$ of closed
elements in $A$ and $\mathcal{C}_B$ of closed elements in $B$. If $B$ is UC, then these bijections are order-preserving.
\end{thm}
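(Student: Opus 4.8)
The plan is to reduce the existence of the bijection to Theorem~\ref{t:main} by showing that, under modularity, the domains appearing there are as large as possible: $\mathcal{A}=\mathcal{C}_A$, $\mathcal{B}=\mathcal{C}_B$, and the equivalent condition for $\psi$ to be well defined holds automatically. Throughout I will use that, since $A$ and $B$ are modular, every element has a closure (Lemma~\ref{l:ess}(iv)), that every closed element is Galois and $\beta(0)=0$ (Lemma~\ref{l:clgal}(i)), and that the transfer Lemma~\ref{l:transfer} is available because $(\alpha,\beta)$ is essential retractable with $\beta(0)=0$. The bijections will then be $\varphi(a)=\overline{\alpha(a)}$ and $\psi(b)=\overline{\beta(b)}$, and the final claim about order-preservation will be handled separately.

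First I would prove that $\mathcal{A}=\mathcal{C}_A$, i.e. that $\alpha(a)$ has a unique closure in $B$ for every closed $a\in A$. Fix such an $a$; it is Galois, so $\beta\alpha(a)=a$. Choose any closure $c$ of $\alpha(a)$ in $B$, so $\alpha(a)$ is essential in $[0,c]$ and $c$ is closed. Applying Lemma~\ref{l:transfer}(iii) to $\alpha(a)\le c$ gives that $\beta\alpha(a)=a$ is essential in $[0,\beta(c)]$; since $a$ is closed this forces $\beta(c)=a$, whence $\alpha\beta(c)=\alpha(a)$. Because $c$ is closed, the UC hypothesis says that $c$ is the unique closure of $\alpha\beta(c)=\alpha(a)$. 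As $c$ was an arbitrary closure of $\alpha(a)$, the closure of $\alpha(a)$ is unique, so $a\in\mathcal{A}$, and $\varphi(a)=c$ satisfies $\beta\varphi(a)=a$.

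The heart of the argument — and the step I expect to be the main obstacle — is to show that $\beta(b)$ is closed in $A$ whenever $b$ is closed in $B$; this simultaneously gives $\mathcal{B}=\mathcal{C}_B$ and the condition characterizing when $\psi$ is well defined in Theorem~\ref{t:main}. The naive approach would compare two closures of $\alpha\beta(b)$ in $B$ and invoke uniqueness, but $B$ is \emph{not} assumed UC here, so the required uniqueness must instead be extracted from the UC property of the Galois connection. Concretely, let $d$ be a closure of $\beta(b)$ in $A$; then $d$ is closed, hence Galois, and Lemma~\ref{l:transfer}(ii) yields that $\alpha\beta(b)$ is essential in $[0,\alpha(d)]$. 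Applying the already-proved first step to the closed element $d$, the closure $c=\overline{\alpha(d)}$ is closed in $B$ with $\alpha(d)$ essential in $[0,c]$, so by transitivity of essentiality (Lemma~\ref{l:ess}(i)) $\alpha\beta(b)$ is essential in $[0,c]$. Thus both $b$ and $c$ are closures of $\alpha\beta(b)$ in $B$; since $b$ is closed, the UC hypothesis makes $b$ the unique closure of $\alpha\beta(b)$, forcing $c=b$ and therefore $\beta(b)=\beta(c)=d$. Hence $\beta(b)=d$ is closed, as required. With $\mathcal{A}=\mathcal{C}_A$ and $\mathcal{B}=\mathcal{C}_B$ established together with the well-definedness condition, Theorem~\ref{t:main} delivers the mutually inverse bijections $\varphi:\mathcal{C}_A\to\mathcal{C}_B$ and $\psi:\mathcal{C}_B\to\mathcal{C}_A$; one may also verify the inverse relations directly, since $\beta\varphi={\rm id}$ by the first step and $\varphi\psi(b)=\overline{\alpha\beta(b)}=b$ by retractability together with the UC property.

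Finally, for order-preservation under the extra assumption that $B$ is UC, I would argue that $\psi$ is monotone because $\psi(b)=\overline{\beta(b)}=\beta(b)$ (its closure being itself) and $\beta$ is order-preserving, while $\varphi$ is monotone because $a_1\le a_2$ gives $\alpha(a_1)\le\alpha(a_2)$ and then $\overline{\alpha(a_1)}\le\overline{\alpha(a_2)}$ by Lemma~\ref{l:ess}(v), which is exactly where modular UC-ness of $B$ is needed. Together with the bijection, this makes $\varphi$ and $\psi$ mutually inverse order-isomorphisms.
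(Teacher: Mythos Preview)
Your proposal is correct and follows essentially the same strategy as the paper: reduce to Theorem~\ref{t:main} by proving $\mathcal{A}=\mathcal{C}_A$ and $\mathcal{B}=\mathcal{C}_B$, extracting the needed uniqueness from the UC property of the Galois connection rather than from any UC assumption on $B$, and then invoking Lemma~\ref{l:ess}(v) for monotonicity when $B$ is UC. The only organizational difference is in the second step: the paper takes an arbitrary $a'$ with $\beta(b)$ essential in $[0,a']$, pushes forward to a closure $b_0$ of $\alpha(a')$ in $B$, and concludes $b_0=b$ directly; you instead take a closure $d$ of $\beta(b)$, feed $d$ back through the already-established first step to get $\beta(\overline{\alpha(d)})=d$, and then identify $\overline{\alpha(d)}$ with $b$. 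Both routes are valid and use the same ingredients.
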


\begin{proof} We use the notation from Theorem \ref{t:main}. Hence $\mathcal{A}$ is the set of all closed elements $a\in
A$ such that $\alpha(a)$ has a unique closure in $B$, $\mathcal{B}$ is the set of all closed elements $b\in B$ such that
$\beta(b)$ is closed in $A$, and the maps $\varphi:\mathcal{A}\to \mathcal{B}$ given by
$\varphi(a)=\overline{\alpha(a)}$ for every $a\in \mathcal{A}$, and $\psi:\mathcal{B}\to \mathcal{A}$ given by
$\psi(b)=\beta(b)$ for every $b\in \mathcal{B}$, are well-defined mutually inverse bijections.
Now it is enough to prove that $\mathcal{A}=\mathcal{C}_A$ and $\mathcal{B}=\mathcal{C}_B$. Clearly,
$\mathcal{A}\subseteq \mathcal{C}_A$ and $\mathcal{B}\subseteq \mathcal{C}_B$.  

Let $a\in \mathcal{C}_A$. We claim that $\alpha(a)$ has a unique closure in $B$. The existence of a closure of
$\alpha(a)$ in $B$ follows by Lemma \ref{l:ess}. Now assume that $b_1$ and $b_2$ are two coclosures of
$\alpha(a)$ in $B$. Then $\alpha(a)$ is closed both in $[0,b_1]$ and in $[0,b_2]$. By Lemma \ref{l:transfer},
$\beta\alpha(a)$ is essential both in $[0,\beta(b_1)]$ and in $[0,\beta(b_2)]$. Since $a$ is closed, we have
$\beta\alpha(a)=a$ by Lemma
\ref{l:clgal}. Hence $a$ is essential both in $[0,\beta(b_1)]$ and in $[0,\beta(b_2)]$. But $a$ is closed in $A$,
hence we must have $\beta(b_1)=\beta(b_2)=a$. Since $(\alpha,\beta)$ is retractable, $\alpha\beta(b_1)=\alpha\beta(b_2)$
is essential in $[0,b_1]$. Then $b_1$ is a closure of $\alpha\beta(b_2)$ in $B$.
Since $(\alpha,\beta)$ is UC, $b_2$ is the unique closure of $\alpha\beta(b_2)$ in $B$, hence $b_1=b_2$. Then
$\alpha(a)$ has a unique closure in $B$. Thus $a\in \mathcal{A}$. 

Now let $b\in \mathcal{C}_B$. We claim that $\beta(b)$ is closed in $A$. To this end, let $a'\in A$ be such that
$\beta(b)$ is essential in $[0,a']$. By Lemma \ref{l:transfer}, $\alpha\beta(b)$ is essential in $[0,\alpha(a')]$.
Since $B$ is modular, Lemma \ref{l:ess} yields a closure of $\alpha(a')$ in $B$, say $b_0$. Then
$\alpha(a')$ is essential in $[0,b_0]$, whence $\alpha\beta(a)$ is essential in $[0,b_0]$ by Lemma \ref{l:ess}.
Hence $b_0$ is a closure of $\alpha\beta(b)$ in $B$. Since $(\alpha,\beta)$ is UC, $b$ is the unique
closure of $\alpha\beta(b)$ in $B$, hence $b_0=b$. Then $\alpha(a')\leq b$, which implies $a'\leq \beta(b)$ because
$(\alpha,\beta)$ is a Galois connection. Hence $\beta(b)=a'$, and so $\beta(b)$ is closed in $A$. Thus $b\in
\mathcal{B}$.

It follows that $\mathcal{A}=\mathcal{C}_A$ and $\mathcal{B}=\mathcal{C}_B$, as desired. Note that $\psi$ is
order-preserving, and by Lemma \ref{l:ess}, $\varphi$ is order-preserving if $B$ is UC.
\end{proof}

\begin{cor} \label{c:bij} Let $(\alpha,\beta)$ be an essential retractable UC Galois connection between two bounded
modular lattices $A$ and $B$. Then the following are equivalent:

(i) The mutually inverse bijections from Theorem \ref{t:main2} are the restrictions of $\alpha$ and $\beta$ to
the sets $\mathcal{C}_A$ of coclosed elements in $A$ and $\mathcal{C}_B$ of coclosed elements in $B$ respectively.

(ii) Every closed element in $B$ is Galois.
\end{cor}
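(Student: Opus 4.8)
The plan is to work entirely with the explicit description of the two bijections obtained in the proof of Theorem \ref{t:main2}, namely $\varphi\colon \mathcal{C}_A\to\mathcal{C}_B$ given by $\varphi(a)=\overline{\alpha(a)}$ and $\psi\colon\mathcal{C}_B\to\mathcal{C}_A$ given by $\psi(b)=\beta(b)$, which are mutually inverse. The first observation I would record is that $\psi$ is already literally the restriction of $\beta$ to $\mathcal{C}_B$, so the content of condition (i) lies entirely in the map $\varphi$: condition (i) holds if and only if $\varphi(a)=\alpha(a)$ for every closed $a\in A$, equivalently $\alpha(a)$ is closed in $B$ for every closed $a\in A$. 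Indeed, when $\alpha(a)$ is closed it is its own closure, so $\overline{\alpha(a)}=\alpha(a)$ by uniqueness. With this reduction in place, both implications become short computations involving $\varphi$, $\psi$ and the composites $\alpha\beta$ and $\beta\alpha$.

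For the implication (ii) $\Rightarrow$ (i), I would fix a closed element $a\in\mathcal{C}_A$ and consider $\varphi(a)=\overline{\alpha(a)}$, which is closed in $B$ by construction, hence Galois by (ii); thus $\alpha\beta(\overline{\alpha(a)})=\overline{\alpha(a)}$. Since $\psi=\varphi^{-1}$ and $\psi=\beta|_{\mathcal{C}_B}$, one has $\beta(\overline{\alpha(a)})=\psi(\varphi(a))=a$, so that $\alpha\beta(\overline{\alpha(a)})=\alpha(a)$. Combining the two equalities gives $\overline{\alpha(a)}=\alpha(a)$, i.e. $\alpha(a)$ is closed and $\varphi(a)=\alpha(a)$, which by the reduction is exactly (i).

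For the implication (i) $\Rightarrow$ (ii), I would fix a closed element $b\in\mathcal{C}_B$. Then $\beta(b)=\psi(b)\in\mathcal{C}_A$ is closed in $A$, so by (i) we have $\varphi(\beta(b))=\alpha\beta(b)$. On the other hand $\varphi(\beta(b))=\varphi(\psi(b))=b$ because $\varphi$ and $\psi$ are mutually inverse. Hence $\alpha\beta(b)=b$, i.e. $b$ is Galois, as required.

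I do not anticipate a genuine obstacle: once the reduction of the first paragraph is set up, both directions are immediate from the identities $\psi=\beta|_{\mathcal{C}_B}$, $\varphi\circ\psi={\rm id}$ and $\psi\circ\varphi={\rm id}$. The only point demanding care is the bookkeeping around closures, so I would be explicit in invoking modularity of $B$ (Lemma \ref{l:ess}) to guarantee that the relevant closures exist and that a closed element is its own closure; this is what lets the value $\overline{\alpha(a)}$ collapse to $\alpha(a)$ precisely when $\alpha(a)$ is closed, which is the hinge of the argument.
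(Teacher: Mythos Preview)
Your proof is correct and is precisely the detailed unpacking of what the paper records only as ``immediate by Theorems \ref{t:main} and \ref{t:main2}'': you use the explicit forms $\varphi(a)=\overline{\alpha(a)}$ and $\psi(b)=\beta(b)$ from those theorems, observe that $\psi$ is already $\beta|_{\mathcal{C}_B}$, and then play the inverse relations $\varphi\psi=\mathrm{id}$, $\psi\varphi=\mathrm{id}$ against the Galois condition $\alpha\beta(b)=b$. This is the same approach, just written out in full.
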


\begin{proof} This is immediate by Theorems \ref{t:main} and \ref{t:main2}.
\end{proof}

\begin{rem} \rm Under the hypotheses of Theorem \ref{t:main2}, every closed element in $A$ is Galois by
Lemma \ref{l:clgal}. In case every closed element in $B$ is Galois, Theorem \ref{t:main2} establishes in fact a
bijection between the sets of closed Galois elements in $A$ and $B$. We point out that this is not true in general.
Indeed, let $(\alpha,\beta)$ be the essential retractable UC Galois connection from Example \ref{e:ex} (1). The closed
Galois elements in the domain $A=L(G)$ of $\alpha$ are $0$, $H_3$, $H_5$ and $G$, whereas the only closed Galois
element in the domain $B=L(G)$ of $\beta$ is $0$. Hence there is no bijection between the two sets of
closed Galois elements. Note that there are closed elements in $B=L(G)$ which are not Galois, for instance $H_3$.
\end{rem}

The following module-theoretic corollary follows immediately by Theorems \ref{t:retcoret} and \ref{t:main2}.

\begin{cor} {\rm \cite[Theorem~1.2]{Z}} Assume that $N$ is $M$-faithful. Then there are mutually inverse bijections
between the set of closed submodules of $\Hom_R(M,N)_S$ and the set of closed submodules of $N_R$. If $N_R$ is UC, then
these bijections preserve inclusions. 
\end{cor}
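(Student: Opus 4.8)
The plan is to read the statement off Theorem~\ref{t:main2} once the module-theoretic data has been translated into the lattice language of Section~2. First I would fix the two relevant submodule lattices $A=L(\Hom_R(M,N)_S)$ and $B=L(N_R)$, and recall that submodule lattices are modular, so that the standing modularity hypothesis of Theorem~\ref{t:main2} is automatically satisfied. Next, since $N$ is $M$-faithful, Theorem~\ref{t:retcoret}(v) supplies the essential retractable UC Galois connection required as input, namely the Albu--N\u ast\u asescu connection $(r'_N,l'_U)$ between $A$ and $B$ (this is the one whose codomain-of-$\alpha$ side is $L(N_R)$, which is exactly what is needed for the final clause of the corollary to line up).

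With those facts in place, the second step is a direct invocation of Theorem~\ref{t:main2}: it yields mutually inverse bijections between the set $\mathcal{C}_A$ of closed elements of $A$ and the set $\mathcal{C}_B$ of closed elements of $B$. The only verification here is the dictionary between the two settings. A closed element of $L(\Hom_R(M,N)_S)$ in the sense of Definition~\ref{d:ess}(2) is precisely a closed submodule of $\Hom_R(M,N)_S$, and a closed element of $L(N_R)$ is precisely a closed submodule of $N_R$; indeed the lattice notion of closedness specializes to the usual complement-submodule notion. Hence the abstract bijection of Theorem~\ref{t:main2} is literally the asserted bijection between the two sets of closed submodules.

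For the final clause I would observe that $N_R$ being a UC module means exactly that every submodule of $N_R$ has a unique closure, i.e. that the lattice $B=L(N_R)$ is UC in the sense of Definition~\ref{d:ess}(5). Under this extra hypothesis the closing sentence of Theorem~\ref{t:main2} applies, so both bijections are order-preserving; and since the order on a submodule lattice is inclusion, order-preserving is the same as inclusion-preserving, which is the claim.

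I do not expect any genuine obstacle: all the substance already resides in Theorems~\ref{t:retcoret} and~\ref{t:main2}, and what remains is purely a check that the module-theoretic vocabulary (faithfulness, closed submodule, UC module, inclusion) matches the lattice-theoretic vocabulary there. The one point deserving care is feeding the correct Galois connection into the machine, namely the one whose two lattices are $L(\Hom_R(M,N)_S)$ and $L(N_R)$ with $B=L(N_R)$ on the side carrying the UC hypothesis, so that the UC clause of Theorem~\ref{t:main2} matches the UC hypothesis on $N_R$ in the statement.
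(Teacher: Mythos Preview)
Your proposal is correct and is exactly the paper's argument: the corollary is obtained by feeding the Galois connection supplied by Theorem~\ref{t:retcoret}(v) into Theorem~\ref{t:main2}, after noting that submodule lattices are modular and that the UC hypothesis on $N_R$ matches the UC hypothesis on $B$ in the last sentence of Theorem~\ref{t:main2}. One small caution: as printed, Theorem~\ref{t:retcoret}(v) names $(r_M,l_U)$ rather than $(r'_N,l'_U)$, but the surrounding applications (Corollary~\ref{c:uhdim}(v) and the extending corollary) make clear that the intended connection is indeed $(r'_N,l'_U)$ between $L(U_S)$ and $L(N_R)$, so your reading is the right one.
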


Let us recall the following notion, which is the lattice-theoretic version of the corresponding one for modules
(e.g., see \cite{DHSW}).

\begin{defn} \label{d:extending} \rm Let $(A,\wedge,\vee,0,1)$ be a bounded modular lattice. Then $A$ is called {\it
extending} if for every $a\in A$ there exists a complement $a'\in A$ such that $a$ is essential in $[0,a']$.
Equivalently, $A$ is extending if and only if every closed element $a\in A$ is a complement.
\end{defn}

Now we may relate extending-type properties of some bounded modular lattices. 

\begin{thm} \label{t:extending} Let $(\alpha,\beta)$ be an essential retractable UC Galois connection between two
bounded modular lattices $A$ and $B$.

(i) Assume that $\beta$ is additive. If $B$ is extending, then so is $A$. 

(ii) Assume that $\alpha(1)=1$. If $A$ is extending, then so is $B$. 
\end{thm}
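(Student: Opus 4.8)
The plan is to read off both implications from the bijective correspondence of Theorem~\ref{t:main2} together with the characterization in Definition~\ref{d:extending}: a bounded modular lattice is extending precisely when every one of its closed elements is a complement. Recall that Theorem~\ref{t:main2} provides mutually inverse bijections between the set $\mathcal{C}_A$ of closed elements of $A$ and the set $\mathcal{C}_B$ of closed elements of $B$, realized by $\varphi(a)=\overline{\alpha(a)}$ and $\psi(b)=\beta(b)$. So in each part I would take an arbitrary closed element of the lattice claimed to be extending, transport it across this bijection, use the extending hypothesis on the other lattice to obtain a complement there, and then push that complement back.

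For part (i), let $a\in\mathcal{C}_A$ be closed in $A$ and put $b=\varphi(a)=\overline{\alpha(a)}\in\mathcal{C}_B$; since $\psi$ inverts $\varphi$, we have $\beta(b)=\psi(b)=a$. As $B$ is extending, the closed element $b$ is a complement in $B$. Now I would invoke the additivity of $\beta$: as noted before Theorem~\ref{t:udim}, an additive $\beta$ preserves complements (this uses $\beta(1)=1$ from Lemma~\ref{l:galois} and $\beta(0)=0$ from Lemma~\ref{l:clgal}). Hence $a=\beta(b)$ is a complement in $A$, and since $a$ was an arbitrary closed element, $A$ is extending.

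For part (ii), let $b\in\mathcal{C}_B$ be closed in $B$ and put $a=\psi(b)=\beta(b)\in\mathcal{C}_A$; since $\varphi$ inverts $\psi$, we have $\overline{\alpha(a)}=\varphi(a)=b$. As $A$ is extending, $a$ is a complement in $A$. Using the hypothesis $\alpha(1)=1$, Lemma~\ref{l:clgal}(iv) shows $\alpha$ preserves complements, so $\alpha(a)$ is a complement in $B$, and therefore closed in $B$ by Lemma~\ref{l:ess}(iii) since $B$ is modular. I expect the final identification to be the one delicate step: the equality $\overline{\alpha(a)}=b$ means exactly that $\alpha(a)$ is essential in $[0,b]$, and combining this with the fact that $\alpha(a)$ is now known to be closed, the definition of a closed element forces $\alpha(a)=b$. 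Thus $b=\alpha(a)$ is a complement in $B$, and $B$ is extending.

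Throughout, the one point I would verify carefully is that I am using the refined form of the correspondence from the proof of Theorem~\ref{t:main2}, namely $\psi(b)=\beta(b)$ rather than $\psi(b)=\overline{\beta(b)}$; this is what lets additivity of $\beta$ and preservation of complements by $\alpha$ act directly on the honest images $\beta(b)$ and $\alpha(a)$, which is the crux of both arguments.
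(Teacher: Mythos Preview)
Your proof is correct and follows essentially the same strategy as the paper: both parts transport closed elements through the bijection of Theorem~\ref{t:main2} and then use that $\beta$ (respectively $\alpha$) preserves complements. The only minor variation is in the final step of (ii): the paper, having found a complement $b'$ of $\alpha\beta(b)$, checks directly that the same $b'$ complements $b$ (using $\alpha\beta(b)\leq b$ for the join and retractability for the meet), whereas you instead observe that $\alpha\beta(b)$, being a complement in the modular lattice $B$, is closed, and then conclude $\alpha\beta(b)=b$ from its essentiality in $[0,b]$---a pleasant byproduct being that every closed element of $B$ is Galois under the hypotheses of (ii).
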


\begin{proof} (i) Assume that $B$ is extending. Let $a$ be a closed element in $A$. Then
$\varphi(a)=\overline{\alpha(a)}$ is closed in $B$ by Theorem \ref{t:main2}. By hypotheses, $\overline{\alpha(a)}$ is a
complement in $B$, hence $a=\beta(\overline{\alpha(a)})$ is a complement in $A$, because $\beta$ is additive. Thus $A$
is extending.

(ii) Assume that $A$ is extending. Let $b$ be a closed element in $B$. Then $\psi(b)=\beta(b)$ is closed in $A$ by
Theorem \ref{t:main2}. Since $A$ is extending, $\beta(b)$ is a complement in $A$. Then $\alpha\beta(b)$ is a complement
in $B$ by Lemma \ref{l:clgal}. Hence there there exists $b'\in B$ such that $\alpha\beta(b)\vee b'=1$ and
$\alpha\beta(b)\wedge b'=0$. Then clearly $b\vee b'=1$, and by the retractability of $(\alpha,\beta)$, $b\wedge b'=0$.
Hence $b$ is a complement in $B$, and so $B$ is extending.
\end{proof}

Concerning the module-theoretic Galois connection $(r_N',l_U')$, $l_U'$ is clearly additive, and we have 
$r_N'(\Hom_R(M,N))=N$ if and only if $\Hom_R(M,N)M=N$ if and only if $N$ is $M$-generated. Now Theorems \ref{t:retcoret}
and \ref{t:extending} yield the following consequence.

\begin{cor} \cite[Corollary~1.3]{Z} Assume that $N$ is $M$-faithful. If $N_R$ is extending, then so is $\Hom_R(M,N)_S$.
The converse holds if $N$ is $M$-generated.  
\end{cor}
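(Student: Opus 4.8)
The plan is to read the corollary purely as an instance of Theorem \ref{t:extending} applied to the image-side module-theoretic Galois connection $(r_N',l_U')$, which is a Galois connection between the bounded modular lattices $A=L(\Hom_R(M,N)_S)$ and $B=L(N_R)$ (submodule lattices are modular). First I would set up the dictionary between modules and lattices: a module is extending exactly when its submodule lattice is an extending lattice in the sense of Definition \ref{d:extending}, so the two assertions of the corollary are precisely the statements ``$A$ is extending'' and ``$B$ is extending''. Writing $\alpha=r_N'$ and $\beta=l_U'$, the top element of $A$ is $U=\Hom_R(M,N)$ and the top element of $B$ is $N$; I would also record the two facts established in the paragraph preceding the corollary, namely that $\beta=l_U'$ is additive and that $\alpha(1)=r_N'(U)=N$ holds if and only if $N$ is $M$-generated.

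Next I would verify the standing hypothesis of Theorem \ref{t:extending}, that $(r_N',l_U')$ is an essential retractable UC Galois connection; this is exactly the point where $M$-faithfulness of $N$ is consumed, via Theorem \ref{t:retcoret}, just as in the proof of the preceding correspondence corollary \cite[Theorem~1.2]{Z}. With that in hand, the forward implication is Theorem \ref{t:extending}(i): since $\beta=l_U'$ is additive and $B=L(N_R)$ is extending, the conclusion is that $A=L(\Hom_R(M,N)_S)$ is extending, i.e.\ $N_R$ extending forces $\Hom_R(M,N)_S$ extending. The converse is Theorem \ref{t:extending}(ii): its hypothesis $\alpha(1)=1$ unwinds to $r_N'(U)=N$, which is precisely the assumption that $N$ is $M$-generated, and then $A$ extending yields $B$ extending, i.e.\ $\Hom_R(M,N)_S$ extending forces $N_R$ extending.

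I expect no deep obstacle: the entire mathematical weight sits in Theorem \ref{t:extending} and in the input that $M$-faithfulness makes $(r_N',l_U')$ essential retractable UC, so the remaining work is bookkeeping. The only points that demand care are keeping straight which submodule lattice plays the role of the domain $A$ of $\alpha$ and which plays the role of the domain $B$ of $\beta$, so that the two directions of the corollary are matched to the correct parts of Theorem \ref{t:extending}, and observing that the additivity of $l_U'$ is needed only for the forward direction while the surjectivity condition $\alpha(1)=1$ (equivalently, $M$-generation of $N$) is needed only for the converse. This asymmetry in the hypotheses is exactly what produces the asymmetry in the corollary's statement, where the implication holds unconditionally but the converse requires $N$ to be $M$-generated.
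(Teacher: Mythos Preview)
Your proposal is correct and follows exactly the approach the paper takes: the corollary is obtained by applying Theorem~\ref{t:extending} to the Galois connection $(r'_N,l'_U)$ between $A=L(\Hom_R(M,N)_S)$ and $B=L(N_R)$, after invoking Theorem~\ref{t:retcoret} for the essential retractable UC property and the remarks in the preceding paragraph for the additivity of $l'_U$ and for the equivalence $r'_N(U)=N\Leftrightarrow N$ is $M$-generated. Your matching of part~(i) to the forward implication and part~(ii) to the converse, with the correct identification of which lattice plays the role of $A$ and which of $B$, is precisely what the paper intends.
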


\section{Dualizations}

By considering the concepts of essential and closed element, closure of an element, UC, uniform and extending lattice in
the set-dual $A^{\rm op}$ of a bounded lattice $(A,\wedge,\vee,0,1)$, one obtains the notions of {\it coessential} (or
{\it cosmall}) and {\it coclosed} element, {\it coclosure} of an element, {\it UCC}, {\it hollow} and {\it
lifting} lattice respectively (see \cite{DHSW} for the module-theoretic notions). The results concerning all these
notions are in general dualizable, except for those involving the existence of a (co)closure of an element. While every
element in a bounded modular lattice has a closure, this is not the case for coclosures. For instance, the subgroup
$2\mathbb{Z}$ of the abelian group $\mathbb{Z}$ has no coclosure in the subgroup lattice of $\mathbb{Z}$
(\cite[3.10]{CLVW}). When the bounded modular lattice $A$ is \emph{amply supplemented}, in the sense that every element
$a\in A$ has a supplement $x\in A$ such that $a$ is coessential in $[x,1]$, then every element of $A$ has a coclosure in
$A$ (e.g., see \cite[Lemma~3.3]{CIKO}). 

Let us point out that $(\alpha,\beta)$ is a Galois connection between two bounded lattices $A$ and $B$ if and only if
$(\beta,\alpha)$ is a Galois connection between $B^{\rm op}$ and $A^{\rm op}$. The concepts of essential,
retractable and UC Galois connection are dualized as follows. 

\begin{defn} \rm A Galois connection $(\alpha,\beta)$ between two bounded lattices $A$ and $B$ is called: 

(1) {\it coessential} if for every $b\in B$, $b$ is coessential in $[\alpha\beta(b),1]$.

(2) {\it coretractable} if for every $a\in A$, $\beta\alpha(a)$ is coessential in $[a,1]$.

(3) {\it UCC} if for every coclosed element $a\in A$, $a$ is the unique coclosure of $\beta\alpha(a)$ in $A$. 
\end{defn}

\begin{rem} \rm (1) Note that a coretractable Galois connection was called cosmall in \cite[Definition~2.6]{CIKO}. We
prefer to use the present name due to the fact that it is a dualization of a retractable Galois connection, which in
turn corresponds to the already developed concept of retractable modules. 

(2) Let $(\alpha,\beta)$ be a Galois connection between two bounded lattices $A$ and $B$. Since $\alpha\beta(1)=1$,
the condition that $\beta$ preserves finite suprema, used in \cite{CIKO}, implies that $(\alpha,\beta)$ is coessential
(see \cite[Lemma~2.5]{CIKO}). This also ensures that $\alpha(1)=1$, dually to Lemma \ref{l:clgal} (i).  
\end{rem}

We leave to the reader the statements of the duals of the results from the previous sections and their applications to
particular (module-theoretic) Galois connections, such as $(r_M,l_U)$ or $(r'_N,l'_U)$. Nevertheless, we state the dual
of Theorem \ref{t:main2}, which involves the extra condition of an amply supplemented latice. 

\begin{thm} \label{t:hmain2} Let $(\alpha,\beta)$ be a coessential coretractable UCC Galois connection between two
bounded modular lattices $A$ and $B$ such that $A$ is amply supplemented. Then there are mutually inverse bijections
between the set of coclosed elements in $A$ and the set of coclosed elements in $B$. If $A$ is UCC, then these
bijections are order-preserving.
\end{thm}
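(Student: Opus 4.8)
The plan is to obtain Theorem~\ref{t:hmain2} as the formal set-dual of Theorem~\ref{t:main2}, transferring every ingredient through the order-reversal $A\mapsto A^{\rm op}$, $B\mapsto B^{\rm op}$. The crucial observation, already recorded in the text, is that $(\alpha,\beta)$ is a Galois connection between $A$ and $B$ if and only if $(\beta,\alpha)$ is a Galois connection between $B^{\rm op}$ and $A^{\rm op}$. Under this passage the roles of $\alpha$ and $\beta$ are interchanged, and each of the defining properties dualizes exactly as specified in Definition after the Dualizations heading: coessentiality of $(\alpha,\beta)$ is the essentiality of $(\beta,\alpha)$ on the opposite lattices, coretractability becomes retractability, UCC becomes UC, coclosed elements become closed elements, and coclosures become closures. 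Thus the hypotheses of Theorem~\ref{t:main2}, applied to the Galois connection $(\beta,\alpha)$ between $B^{\rm op}$ and $A^{\rm op}$, are precisely the present hypotheses on $(\alpha,\beta)$.

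First I would verify this dictionary of dualizations explicitly for the three structural conditions and for the lattice-theoretic notions, checking that modularity is self-dual (so both $A^{\rm op}$ and $B^{\rm op}$ remain bounded modular) and that ``$A$ is UCC'' translates to ``$A^{\rm op}$ is UC''. Next I would apply Theorem~\ref{t:main2} to $(\beta,\alpha)$ on $(B^{\rm op},A^{\rm op})$ to obtain mutually inverse bijections between the set $\mathcal{C}_{B^{\rm op}}$ of closed elements of $B^{\rm op}$ and the set $\mathcal{C}_{A^{\rm op}}$ of closed elements of $A^{\rm op}$; order-preservation in the opposite lattices follows from the clause ``if $A^{\rm op}$ is UC'', i.e.\ if $A$ is UCC. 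Reinterpreting these sets back in $A$ and $B$, the closed elements of $A^{\rm op}$ are exactly the coclosed elements of $A$ and likewise for $B$, and an order-preserving map between opposite lattices is order-preserving between the original lattices; so the bijections descend to mutually inverse, order-preserving bijections between the coclosed elements of $A$ and of $B$.

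The one genuine wrinkle is the hypothesis that $A$ is amply supplemented, which has no counterpart in Theorem~\ref{t:main2}. The reason it is needed is that the dual of Lemma~\ref{l:ess}(iv) fails: not every element of a bounded modular lattice possesses a coclosure. The proof of Theorem~\ref{t:main2} invokes the existence of closures at the points where it produces $b_0$ (a closure of $\alpha(a')$) and where it asserts that every element has a closure; dualizing, those steps require the existence of coclosures in $A$. As noted in the Dualizations section, amply supplemented bounded modular lattices do have coclosures (see \cite[Lemma~3.3]{CIKO}), so imposing ample supplementedness on $A$ exactly supplies the missing self-dual ingredient of Lemma~\ref{l:ess}(iv) and makes the dual argument go through.

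The main obstacle, then, is bookkeeping rather than new mathematics: I must be careful that the \emph{asymmetry} of Theorem~\ref{t:main2}---where the existence of closures is used freely in $A$ and $B$ but the UC/order-preservation hypothesis is placed only on $B$---dualizes consistently, so that after swapping $(\alpha,\beta)\mapsto(\beta,\alpha)$ the ample-supplementedness lands on $A$ and the order-preservation clause is governed by $A$ being UCC. I would present the result as a short remark that it follows from Theorem~\ref{t:main2} applied to $(\beta,\alpha)$ between $B^{\rm op}$ and $A^{\rm op}$, together with the observation that coclosures exist in the amply supplemented lattice $A$, rather than reproducing the entire argument.
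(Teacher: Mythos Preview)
Your proposal is correct and is precisely the approach the paper intends: the paper does not give a separate proof of Theorem~\ref{t:hmain2} but presents it as the set-dual of Theorem~\ref{t:main2}, noting only that the extra hypothesis of ample supplementedness on $A$ is needed to guarantee the existence of coclosures (since the dual of Lemma~\ref{l:ess}(iv) fails in general). Your dictionary of dualizations and your identification of where the ample-supplemented hypothesis enters match the paper exactly; one small refinement is that in the proof of Theorem~\ref{t:main2} closures are constructed only in the second lattice $B$, so after dualizing it is only $A^{\rm op}$ (hence $A$) that requires the existence of coclosures, which is why the hypothesis is placed on $A$ alone.
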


\end{document}